\newcommand{\rene}{\color{black}}
\newcommand{\normal}{\color{black}}
\newtheorem{theorem}{Theorem}[section]
\newtheorem{lemma}[theorem]{Lemma}
\newtheorem{corollary}[theorem]{Corollary}
\theoremstyle{definition}
\theoremstyle{remark}
\newtheorem{remark}[theorem]{Remark}
\numberwithin{equation}{section}
\newcommand{\Ee}{\mathds{E}} 
\newcommand{\Pp}{\mathds{P}} 
\newcommand{\dom}{\mathrm{D}} 
\newcommand{\gen}{A} 
\newcommand{\caputo}{\partial} 
\newcommand{\BV}{\mathcal{M}} 
\newcommand{\LL}{\mathcal{L}} 
\newcommand{\I}{\mathds{1}}
\def\R{\mathds{R}}
\def\N{\mathds{N}}
\def\a{\alpha}
\renewcommand{\leq}{\leqslant}
\renewcommand{\geq}{\geqslant}
\renewcommand{\le}{\leqslant}
\renewcommand{\ge}{\geqslant}
\renewcommand{\Re}{\ensuremath{\operatorname{Re}}}
\begin{document}

\title{Reflected spectrally negative stable processes and their governing equations}

\author{Boris Baeumer}
\address{Boris Baeumer, Department of Mathematics and Statistics, University of Otago,
Dunedin, NZ} \email{bbaeumer@maths.otago.ac.nz}
\author{Mih\'aly Kov\'acs}
\address{Mih\'aly Kov\'acs, Department of Mathematics and Statistics, University of Otago,
Dunedin, NZ} \email{mkovacs@maths.otago.ac.nz}
\author{Mark M.\ Meerschaert}
\address{Mark M.\ Meerschaert, Department of Probability and Statistics,
Michigan State University}
\email{mcubed@stt.msu.edu}
\urladdr{http://www.stt.msu.edu/users/mcubed/}
\thanks{M.M.\ Meerschaert was partially
supported by NSF grants DMS-1025486 and DMS-0803360, and NIH grant R01-EB012079.}

\author{Ren\'e L.\ Schilling}
\address{Ren\'e L. Schilling, Institut f\"ur Mathematische Stochastik, Technische Universit\"at Dresden,
Germany}
\email{rene.schilling@tu-dresden.de}
\urladdr{http://www.math.tu-dresden.de/sto/schilling/}

\author{Peter Straka}
\address{Peter Straka, School of Mathematics and
Statistics, The University of New South Wales, Australia}
\email{p.straka@unsw.edu.au}

\begin{abstract}
This paper explicitly computes the transition densities of a spectrally negative stable process with index greater than one, reflected at its infimum.  First we derive the forward equation using the theory of sun-dual semigroups.  The resulting forward equation is a boundary value problem on the positive half-line that involves a negative Riemann-Liouville fractional derivative in space, and a fractional reflecting boundary condition at the origin.  Then we apply numerical methods to explicitly compute the transition density of this space-inhomogeneous Markov process, for any starting point, to any desired degree of accuracy.  Finally, we discuss an application to fractional Cauchy problems, which involve a positive Caputo fractional derivative in time.
\end{abstract}

\keywords{Stable process, reflecting boundary condition, Markov process, fractional derivative, Cauchy problem}


\maketitle
\section{Introduction}

Consider a spectrally negative (no positive jumps) stable L\'evy process $Y_t$ with characteristic function
\begin{equation}\label{YtCF}
\Ee[e^{ik Y_t}]=e^{t(ik)^\alpha}
\end{equation}
for some $1<\alpha\leq 2$.  If $\alpha=2$, then $Y_t$ is a Brownian motion with variance $2t$.  Now define
\begin{equation}\label{ZtDef}
Z_t = Y_t-\inf\{Y_s: 0 \le s \le t\} .
\end{equation}
The reflected stable process \eqref{ZtDef} is also the recurrent extension of the process $Y_t$ killed at zero, which instantaneously and continuously
leaves zero, see Patie and Simon \cite{PatieSimon}.  Let $C_\infty(\R)$ denote the Banach space of continuous functions $f:\R\to\R$ that tend to zero as $|x|\to\infty$, with the supremum norm.  We say that a time-homogeneous Markov process $X_t$ is a \emph{Feller process} if the semigroup $T_t f(x)=\Ee[f(X_{t+s})|X_s=x]$ satisfies $T_t f\in C_\infty(\R)$ and $T_t f\to f$ as $t\to 0$ in the Banach space (supremum) norm, for all $f\in C_\infty(\R)$.  It is not hard to show (see Theorem \ref{bwgenerator} in this paper) that $Z_t$ is a Feller process, and since $Z_t\geq 0$ by definition, the space-inhomogeneous Markov process $Z_t$ lives on the state space $[0,\infty)$.

If $\alpha=2$, then this process is called ``reflected Brownian motion'' and the governing differential equation (forward Kolmogorov equation) for the transition density $p(x,y,t)$ of $Z_{t+s}=y$ given $Z_s=x$ is the diffusion equation $\partial_t p(x,y,t)=\partial^2_y p(x,y,t)$ together with the reflecting boundary condition
\begin{equation}\label{rBmBC}
\partial_y p(x,y,t)\bigg\vert_{y=0+}:=\lim_{h\to 0+}\frac{p(x,y+h,t)-p(x,y,t)}{h}\bigg\vert_{y=0}=0 \quad\text{for all $t>0$,}
\end{equation}
(i.e., the normal derivative vanishes) see for example It\^{o} and McKean \cite[Eq.\ 8]{ItoMcKean}.  This paper extends to the case of a reflected stable process.  The stable process $Y_t$ without reflection is a space-homogeneous Markov process, so its transition density $p(y,t)$ is independent of the initial state $x$.  This density solves a fractional diffusion equation, $\partial_t p(y,t)=D^\alpha_{-y} p(y,t)$ that involves a negative Riemann-Liouville fractional derivative in space, see \eqref{RLneg} below for the definition.  The fractional derivative reduces to the usual second derivative in the case $\alpha=2$.  The forward equation for the reflected stable process turns out to be the fractional diffusion equation $\partial_t p(x,y,t)=D^\alpha_{-y} p(x,y,t)$ with the fractional reflecting boundary condition
\begin{equation}\label{PosGrunwald}
 D_{-y}^{\alpha-1} p(x,y,t)\bigg\vert_{y=0+}:=   \lim_{h\to 0+}\frac{1}{h^{\alpha-1}}\sum_{k=0}^\infty w^{\alpha-1}_k p(x,y+kh,t)\bigg\vert_{y=0} =0 \quad\text{for all $t>0$,}
\end{equation}
using the (fractional) binomial coefficients
\[ w^\alpha_k := (-1)^k\binom\alpha k .\]
When $\alpha=2$ we have $w^{\alpha-1}_0=1$, $w^{\alpha-1}_1=-1$, and $w^{\alpha-1}_k=0$ for $k>1$, so that \eqref{PosGrunwald} reduces to the classical condition \eqref{rBmBC}, i.e., the one-sided first derivative.  In either case ($\alpha=2$ or $1<\alpha<2$), the boundary term enforces a no-flux condition at the point $y=0$ in the state space.

The connection between probability and differential equations has profound consequences for mathematics \cite{allouba1,barlow,coupleCTRW,MNV}, and for its applications in science and engineering \cite{GorenfloSurvey,Metzler2000,MetzlerKlafter,scalas1}, including a probabilistic method called particle tracking for solving fractional differential equations, by exploiting the associated Markov process  \cite{MRMT,ParticleTracking,timeLangevin}.   More details on fractional calculus, and its connection to probability theory, may be found in the recent book of Meerschaert and Sikorskii \cite{FCbook}.  Since fractional derivatives are nonlocal operators, the appropriate specification of boundary conditions requires a new approach \cite{agrawal2002solution,DiegoDCN06,kilbas2006north,luchko2010some,metzler2000boundary,voller2010exact}.  For example, one can apply the theory of Volterra integral equations \cite{pruss} or general nonlocal operators \cite{lehoucq}. The results in this paper can help clarify the meaning of reflecting boundary conditions for fractional diffusion.

We believe that this idea will find many useful applications, both inside and outside mathematics.
As a first application, we show in Theorem \ref{MPsubordTh} that a reflected stable process can be used to solve a fractional Cauchy problem, in which the usual first time derivative is replaced by a Caputo fractional derivative of order $0<\beta<1$.

\bigskip\noindent
\textbf{Notation.} We write $C_\infty[0,\infty)$ for the Banach space of continuous functions that vanish at infinity, i.e., $\lim_{x\to\infty} u(x) = 0$, with the uniform norm $\|u\| = \sup_{x\in [0,\infty)} |u(x)|$. Its topological dual is the space of (signed) Radon measures $\BV_b[0,\infty)$, and by $\BV_{ac}[0,\infty)$ we mean the absolutely continuous (with respect to Lebesgue measure) elements in $\BV_b[0,\infty)$. On $\BV_b[0,\infty)$ we use vague (weak-$*$) convergence; i.e.,\ $\mu_n \to \mu$ if, and only if, $\int u \,d\mu_n \to \int u\,d\mu$ for all $u\in C_\infty[0,\infty)$. The subscripts $c, b, ac, \infty$ stand for `compact support', `bounded', `absolutely continuous' and `vanishing at infinity'.
Fractional integrals and derivatives in the Riemann-Liouville sense are denoted by $I^\alpha$ and $D^\alpha$, see \eqref{RLintpos}--\eqref{RLneg} while Caputo derivatives are written as $\caputo^\alpha$, see \eqref{Caputo}. \rene Finally, $\LL  g(s) = \LL [g(t)](s) = \int_0^\infty e^{-st}\,g(t)\,dt$
denotes the usual Laplace transform, and $\LL_{-\infty}  g(s) = \LL_{-\infty} [g(t)](s) = \int_{-\infty}^\infty e^{-st}\,g(t)\,dt$
is the bilateral Laplace transform. \normal

\section{The reflected stable process}
Given a real number $\alpha>0$ that is not an integer, define the positive Riemann-Liouville fractional integral
\begin{equation}\label{RLintpos}
I_{x}^\alpha f(x)=\frac {1}{\Gamma(\alpha)} \int_0^\infty f(x- y) y^{\alpha-1}\,dy=\frac {1}{\Gamma(\alpha)} \int_{-\infty}^x f(y) (x-y)^{\alpha-1}\,dy,
\end{equation}
the negative Riemann-Liouville fractional integral
\begin{equation}\label{RLintneg}
I_{-x}^\alpha f(x)=\frac {1}{\Gamma(\alpha)} \int_0^\infty f(x+ y) y^{\alpha-1}\,dy=\frac {1}{\Gamma(\alpha)} \int_x^\infty f(y) (y-x)^{\alpha-1}\,dy,
\end{equation}
the positive Riemann-Liouville fractional derivative
\begin{equation}\label{RLpos}
D^\alpha_{x} f(x):=\frac {d^n}{d x^n}I_{x}^{n-\alpha} f(x)=\frac 1{\Gamma(n-\alpha)} \frac {d^n}{d x^n}\int_{-\infty}^x f(y) (x-y)^{n-\alpha-1}\,dy,
\end{equation}
and the negative Riemann-Liouville fractional derivative
\begin{equation}\label{RLneg}
D^\alpha_{-x} f(x):=\frac {d^n}{d (-x)^n}I_{-x}^{n-\alpha} f(x)=\frac {(-1)^n}{\Gamma(n-\alpha)} \frac {d^n}{d x^n}\int_x^\infty f(y) (y-x)^{n-\alpha-1}\,dy,
\end{equation}
where $n-1< \alpha <n$.  If $\alpha\in(1,2)$, then $n=2$.  See \cite{Bajlekova,FCbook,Samko} for more details.

Let $Z_t$ be the stochastic process defined in \eqref{ZtDef}, where $Y_t$ is a stable L\'evy process with index $1<\alpha< 2$ and characteristic function \eqref{YtCF}.  Next we will show that $Z_t$ is a conservative time-homogeneous Markov process whose (backward) semigroup $T_tf(x)=\Ee[f(Z_{t+s})|Z_s=x]$ is strongly continuous ($\|T_tf-f\|\to 0$ as $t\downarrow 0$), contractive ($\|T_tf\|\leq \|f\|$), and analytic
(the mapping $t\mapsto T(t)f$ has an analytic
extension to the sectorial region $\{re^{i\theta}\in{\mathbb
C}: r>0, |\theta|<\alpha\}$ for some $\alpha>0$)
on the Banach space $X=C_{\infty}[0,\infty)$, and give a core for the generator. Recall that a core $C_\gen$ of a closed linear operator $\gen$ is a subset of its domain $\dom(\gen)$ that is dense within the domain in the graph norm; i.e., for each $f\in \dom(\gen)$ there exists a sequence $\{f_n\}\subset C_\gen$ such that $f_n\to f$ and $\gen f_n\to \gen f$.

Write
\begin{equation}\label{Sb}
\begin{aligned}
    S_b
    := \Big\{f\in C_\infty&[0,\infty)\,:\, f''\in C(0,\infty),\; f''(x)=O(1) \text{\ as\ } x\to\infty,\\
    &f''(x) = O(x^{\alpha-2}) \text{\ as\ } x\to 0,\; f'\in C_b(0,\infty), \; f'(0+)=0 \Big\}
\end{aligned}
\end{equation}
and denote by $\partial^\alpha_x$ the (positive) Caputo fractional derivative of order $\alpha>0$, which can be defined by 
\begin{equation}\label{Caputo}
    \caputo^\alpha_{x} f(x)
    =I^{n-\alpha}_x f^{(n)}(x)
    =\frac 1{\Gamma(n-\alpha)}\int_0^x (x-y)^{n-1-\alpha}f^{(n)}(y) \,dy
\end{equation}
where $n-1<\alpha<n$ and $f^{(n)}$ is the $n$th derivative of $f$.  The Caputo fractional derivative differs from the Riemann-Liouville form \eqref{RLpos} because the operations of differentiation  and (fractional) integration do not commute in general.  For example, when $0<\alpha<1$ we have
\begin{equation}\label{RLtoCaputoto}
\caputo^\alpha_{x} f(x)=D^\alpha_{x} f(x)-f(0)\frac{x^{-\alpha}}{\Gamma(1-\alpha)}
\end{equation}
for suitably nice bounded functions (e.g., see \cite[p.\ 39]{FCbook}).

\begin{theorem}\label{bwgenerator}
Let $Z_t$ denote the reflected process \eqref{ZtDef} where $Y_t$ is a stable L\'evy process with index $\alpha = 1/\beta\in(1,2)$ and characteristic function \eqref{YtCF}.  Then $Z_t$ is a Feller process, the transition semigroup $T_tf(x)=\Ee[f(Z_{t+s})|Z_s=x]$ on $C_\infty[0,\infty)$ is analytic, with generator $\gen f=\partial^\alpha_{x}f$ for all $f\in C_\gen=\{f\in S_b:\partial^\alpha_{x}f\in C_\infty[0,\infty)\}$, where $\partial^\alpha_{x}$ is the Caputo fractional derivative \eqref{Caputo}, and $C_\gen$ is a core of $\gen$.
\end{theorem}

\begin{proof}
Define the running infimum $I_t = \inf\{Y_s: 0 \le s \le t\}$ and the running supremum $S_t = \sup\{Y_s: 0 \le s \le t\}$.  Let $\hat Y_t=-Y_t$ denote the dual process, and let $\hat I_t$ and $\hat S_t$ denote the running infimum and supremum of $\hat Y_t$, respectively.  Since $\hat Y_t$ is also a L\'evy process, it follows from \cite[Section VI.1, Proposition 1]{Bertoin} that $\hat S_t-\hat Y_t$ is a Feller process, and since $\hat S_t=-I_t$, it follows that $\hat S_t-\hat Y_t=-I_t+Y_t=Z_t$.  Hence $Z_t$ is a Feller process.

It follows from \cite[Proposition 4]{Bernyk2011} that $C_\gen\subset \dom(\gen)$ and $\gen f=\partial_x^\alpha f$ for all $f\in C_\gen$. Note that the extension from $f''$ bounded to $|f''(x)|=O( x^{\alpha-2})$ at $x=0+$ is also mentioned in the proof.

Since $\gen$ generates a strongly continuous contraction semigroup, the resolvent operators $R(\lambda,\gen):=(\lambda-\gen)^{-1}$ exist for all $\Re\lambda>0$ and they are bounded operators. We will now show that
\begin{equation}\label{Mittag-Leffler}
    R(\lambda,\gen)g(x)= -\alpha x^{\alpha-1}E'_\alpha(\lambda x^\alpha)\star g(x)+\frac{\mathcal L g(\lambda^{1/\alpha})}{\lambda^{1-1/\alpha}}E_\alpha(\lambda x^\alpha)
\end{equation}
for all $g\in C_\infty[0,\infty)$, where $\mathcal L g$ is the Laplace transform, $\star$ is the convolution operator, and the Mittag-Leffler function
$E_\alpha(x)=\sum_{n=0}^\infty {x^n}/{\Gamma(1+\alpha n)}$.

Let $R_{\lambda,g}$ denote the right-hand side of \eqref{Mittag-Leffler}. Since
$$
    \LL [E_\alpha(\lambda x^\alpha)](s)
    = \frac{s^{\alpha-1}}{s^\alpha-\lambda}
    \quad\text{and}\quad
    \LL [\alpha x^{\alpha-1}E'_\alpha(\lambda x^\alpha)](s) = \frac 1{s^\alpha-\lambda},
$$
see e.g.\ \cite{Haubold,MainardiWaves}, it follows that
\begin{equation}\label{resL}
    \LL{R_{\lambda,g}}(s)
    =\frac{\LL g(s)}{\lambda-s^\alpha} - \frac{s^{\alpha-1}}{\lambda-s^\alpha} \frac{\lambda^{1/\alpha}\LL g(\lambda^{1/\alpha})}{\lambda}
\end{equation}
Using the fact that $[f\star g]' (x) =[f'\star g](x)+f(0)g(x)$ it follows that $R_{\lambda,g}$ is twice differentiable for any $g\in C_\infty[0,\infty)\cap C^2[0,\infty)$.   Equation \eqref{Caputo} implies
that for any $f\in C^2[0,\infty)$ we have
$$
    \LL[\partial_x^\alpha f](s)
    =s^{\alpha-2}\left(s^2\LL f(s)-s f(0)-f'(0)\right)
    =s^\alpha \LL f(s)-s^{\alpha-1}f(0).
$$\rene
Taking the Laplace transform of $\lambda R_{\lambda,g}-\partial_x^\alpha R_{\lambda,g}$,
we therefore obtain that
\begin{equation}\label{resLexp1}
  \begin{split}
    \LL\big[\lambda R_{\lambda,g}- \partial_x^\alpha R_{\lambda,g}\big](s)
    &= \lambda\LL R_{\lambda,g}(s)-s^\alpha \LL R_{\lambda,g}(s)+s^{\alpha-1}R_{\lambda,g}(0)\\
    &= \LL g(s)-\frac{s^{\alpha-1}}{\lambda^{1-1/\alpha}}\LL g(\lambda^{1/\alpha})+s^{\alpha-1}R_{\lambda,g}(0)\\
    &= \LL g(s)
  \end{split}
\end{equation}
for any $g\in C_\infty[0,\infty)\cap C^2[0,\infty)$, and hence
$R(\lambda,\gen)g=R_{\lambda,g}$.
By continuous extension, \eqref{Mittag-Leffler} holds for all $g\in C_\infty[0,\infty)$.

To show that $C_\gen$ is a core, note that for $g\in C^2_\infty[0,\infty):=C^2[0,\infty)\cap C_\infty[0,\infty)$,
$$\partial^\alpha_x R(\lambda,\gen)g=\lambda R(\lambda,\gen)g-g\in C_\infty[0,\infty)$$
 and hence $R(\lambda,\gen)C^2_\infty[0,\infty)\subset C_\gen$. Pick $f\in \dom(\gen)$ and $g=\lambda f-\gen f$. Since $C^2_\infty[0,\infty)$ is dense in $C_\infty[0,\infty)$, there exists a sequence $\{g_n\}\subset C^2_\infty[0,\infty)$ with $g_n\to g$. Thus, $f_n = R(\lambda,\gen)g_n\to f$ and $\gen f_n=\lambda f_n-g_n\to\lambda f-g=\gen f$. Since $f_n\in R(\lambda,\gen) C^2_\infty[0,\infty) \subset C_A$, we see that $C_A$ is a core.

Finally we show that $\{T_t\}_{t\geq 0}$ is an analytic semigroup.  Since $R(\lambda,\gen)$ is a bounded operator for all $\Re\lambda>0$, a general result from the theory of semigroups \cite[Corollary 3.7.12]{Arendt2001} shows that $\{T_t\}$ is analytic if for some some $M>0$ we have
 \begin{equation}\label{resEst}
 \|\lambda R(\lambda,\gen)g\|\le M\|g\|
 \end{equation}
for all $\Re \lambda>0$ and all $g\in C_\infty[0,\infty)$. Then the result follows from Lemma \ref{LemResolvent}, and this completes the proof.
\end{proof}
\normal

\begin{remark}
Patie and Simon \cite{PatieSimon} show that the reflected stable process $Z_t$ in Theorem~\ref{bwgenerator} has the backward generator
\begin{equation}\label{Ldef}
\gen f(x)=f'(0)\frac{x^{1-\alpha}}{\Gamma(2-\alpha)}+\int_0^x f''(x-y) \frac{y^{1-\alpha}}{\Gamma(2-\alpha)}\,dy .
\end{equation}
They also give the exact domain of the generator \cite[Proposition 2.2]{PatieSimon}.  If $f\in S_b$, then $f'(0)=0$, and $\gen f$ reduces to the Caputo fractional derivative \eqref{Caputo}.
\end{remark}


In view of Theorem~\ref{bwgenerator}, $T_tf(x)=\Ee[f(Z_{t+s})|Z_s=x]$ is a strongly continuous, analytic semigroup on the Banach space $X:=C_\infty[0,\infty)$ with the supremum norm, with generator $\gen f(x)=\caputo_x^\alpha f(x)$ for $f\in S_b$ such that $\caputo^\alpha_{x}f\in X$.  The dual (or adjoint) semigroup $T^*_t$ is defined on the dual space $X^*= \BV_b[0,\infty)$ of finite signed Radon measures on $[0,\infty)$ equipped with the total variation norm:  Given a measure $\mu\in \BV_b[0,\infty)$, use the Jordan decomposition to write $\mu=\mu^{+}-\mu^{-}$ uniquely as a difference of two positive measures, and define $\|\mu\|=\mu^{+}[0,\infty)+\mu^{-}[0,\infty)$.
The dual semigroup satisfies
\begin{equation}\label{dualdef}
\int T_tf(x)\mu(dx)=\int f(x)[T^*_t\mu](dx)
\end{equation}
for all $f\in C_\infty[0,\infty)$ and all $\mu\in \BV_b[0,\infty)$.  See \cite[Section 2.5]{Engel2000} for more details.  In probabilistic terms, since $T_tf(x)=\Ee[f(Z_{t+s})|Z_s=x]$ for this time-homogeneous Markov process, equation \eqref{dualdef} implies that
\begin{equation*}\begin{split}
\int T_tf(x)\mu(dx)&=\int \Ee[f(Z_{t+s})|Z_s=x]\mu(dx)\\
&=\int f(y)P_t(dy,\mu)=\int f(y)[T^*_t\mu](dy)\\
 \end{split}\end{equation*}
where $P_t(y,\mu)=\int P(y,x,t)\mu(dx)$ and $P(y,x,t)=\Pp[Z_{t+s}\leq y|Z_s=x]$ is the transition probability distribution of the Markov process $Z_t$.  Hence, if $\mu$ is the probability distribution of $Z_s$, then $T^*_t\mu(dy)=P_t(dy,\mu)$ is the probability distribution of $Z_{t+s}$.  The dual semigroup is also called the \emph{forward semigroup} associated with the Markov process $Z$, since it maps the probability distribution forward in time.

Next we will compute the generator $\gen^*$ of the forward semigroup.  This is the 
adjoint of the generator $\gen$ of the backward semigroup, in the sense that
\[\int \gen f(x)\mu(dx)=\int f(x)[\gen^*\mu](dx)\]
for all $f\in \dom(\gen)$ and $\mu\in \dom(\gen^*)$.  Theorem~\ref{fwdgen} will show that every measure $\mu\in \dom(\gen^*)$ has a Lebesgue density $g\in L^1[0,\infty)$, so that $\mu(dy)=g(y)\,dy$, and that the adjoint $\gen^*g:=\gen^*\mu$ of the positive fractional Caputo derivative $\gen f(x)=\caputo^\alpha_x f(x)$ in our setting is the negative Riemann-Liouville fractional derivative $\gen^*g(y)=D_{-y}^\alpha g(y)$ using \eqref{RLneg}.


The forward semigroup $T_t^*$ of a Markov process is not, in general, strongly continuous on $\BV_b[0,\infty)$.  That is, there exist measures $\mu$ such that $T_t^*\mu\not\to \mu$ in the total variation norm as $t\downarrow  0$. For example, if $T^*_t$ is the forward semigroup associated with the diffusion equation $\partial_t p=\partial_x^2 p$, and $\mu=\delta_0$ is a point mass at the origin, then $T_t^*\mu$ is a Gaussian probability measure with mean 0 and variance $2t$ for all $t>0$, and since $\mu\{0\}=1$ and $T_t\mu\{0\}=0$ for all $t>0$, we have $\|T_t\mu-\mu\|=1$ for all $t>0$ in the total variation norm.

To handle this situation, we define the \emph{sun dual space} of $X:= C_\infty[0,\infty)$ as
$$
    X^\odot:=\{\mu\in X^*:\lim_{t\downarrow 0}\|T^*_t\mu-\mu\|= 0\} ,
$$
a closed subspace of $X^*=\BV_b[0,\infty)$ on which the forward semigroup is strongly continuous.  It follows from basic semigroup theory \cite[Section 2.6]{Engel2000} that for $\mu\in X^\odot$, $T_t^*\mu\in X^\odot$ for all $t\ge 0$, and $X^\odot=\overline{\dom(\gen^*)}$. The restriction of $\{T_t^*\}_{t\ge 0}$ to $X^\odot$ is called the \emph{sun dual semigroup} $\{T_t^\odot\}_{t\ge 0}$ with generator $\gen^\odot \mu=\gen^*\mu$ for all $\mu\in \dom(\gen^\odot)$, where
\begin{equation}\label{dualdomain}
\dom(\gen^\odot)=\{\mu\in \dom(\gen^*):\gen^*\mu\in X^\odot\} .
\end{equation}
For the reflected stable process, we will show in Theorem~\ref{fwdgen} that $C^\odot_\infty[0,\infty)$ is the space of absolutely continuous elements of $\BV_b[0,\infty)$,
$$
    \BV_{ac}[0,\infty)
    = \big\{\mu\in\BV_b[0,\infty) \,:\, \mu(dy)=g(y)\,dy\text{ for some }g\in L^1[0,\infty)\big\}.
$$
and we will derive the forward equation of the reflected stable process on the sun-dual space. For a general bounded measure $\mu\in \BV_b[0,\infty)$, we will then prove in Corollary~\ref{weaksoln} that $T_t^*\mu$ can be computed as the vague limit of $T_t^\odot\mu_n$, where $\mu_n\to \mu$ vaguely, and $\mu_n\in C^\odot_\infty[0,\infty)$ for all $n$.

%

%

\begin{theorem}\label{fwdgen}
 Let $Z_t$ denote the Feller process \eqref{ZtDef}, where $Y_t$ is a stable L\'evy process with index $\alpha = 1/\beta\in(1,2)$ and characteristic function \eqref{YtCF}, with (backward) semigroup $T_tf(x)=\Ee[f(Z_{t+s})|Z_s=x]$ on $C_\infty[0,\infty)$.   Then
 $
    C^\odot_\infty[0,\infty)
    =
    \BV_{ac}[0,\infty)
$
 and the generator $\gen^\odot g:=\gen^\odot \mu$ of the sun-dual semigroup $\{T^\odot(t)\}_{t\ge0}$ is given by
\begin{equation}\label{sundualgen}
    \gen^\odot g(y)=D^\alpha_{-y}g(y)
\end{equation}
with domain $\dom(\gen^\odot)=\{g\in L^1[0,\infty)\,:\,D^\alpha_{-y}g(y)\in L^1[0,\infty),D^{\alpha-1}_{-y}g(0)=0\}$.
\end{theorem}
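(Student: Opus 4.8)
The plan is to compute the adjoint generator $\gen^*$ explicitly on absolutely continuous measures by a fractional integration by parts, and then to pass to the sun dual by demanding that the result again be absolutely continuous; the boundary condition $D^{\alpha-1}_{-y}g(0)=0$ will emerge precisely as the requirement that no atom appear at the origin. I would first pin down the space $X^\odot$. For the inclusion $X^\odot\subseteq\BV_{ac}[0,\infty)$ I would invoke Proposition~\ref{ZtTDth}: for every $t>0$ the process $Z_t$ started at any point has a Lebesgue density, so $T_t^*\mu\in\BV_{ac}[0,\infty)$ for all $t>0$ and all $\mu\in\BV_b[0,\infty)$. Since $\BV_{ac}[0,\infty)$ is closed in total variation norm and every $\mu\in X^\odot$ satisfies $\mu=\lim_{t\downarrow0}T_t^*\mu$ in that norm, it follows that $\mu\in\BV_{ac}[0,\infty)$. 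For the reverse inclusion I would use the identity $X^\odot=\overline{\dom(\gen^*)}$ recorded above from \cite[Section 2.6]{Engel2000}: smooth functions with compact support in $(0,\infty)$ are $L^1$-dense, and each such $g$ lies in $\dom(\gen^*)$ because $D^\alpha_{-y}g$ is then continuous and compactly supported, hence a bounded measure; therefore $\BV_{ac}[0,\infty)\subseteq X^\odot$.

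The heart of the argument is the integration by parts. For $\alpha\in(1,2)$ write $\caputo^\alpha_x f=I^{2-\alpha}_x f''$ and, for $\mu(dy)=g(y)\,dy$, set $H(y):=I^{2-\alpha}_{-y}g(y)=\frac{1}{\Gamma(2-\alpha)}\int_y^\infty g(x)(x-y)^{1-\alpha}\,dx$, so that $H'=-D^{\alpha-1}_{-y}g$ and $H''=D^\alpha_{-y}g$ by \eqref{RLneg12}. A Fubini interchange gives $\int_0^\infty\caputo_x^\alpha f(x)g(x)\,dx=\int_0^\infty f''(y)H(y)\,dy$, and two integrations by parts yield
\begin{equation*}
\int_0^\infty \gen f\,d\mu = \int_0^\infty f(y)\,D^\alpha_{-y}g(y)\,dy -f'(0)H(0)+f(0)H'(0),
\end{equation*}
provided the contributions at $y=\infty$ vanish; the latter I would justify from $f,f'\in C_\infty$ together with the decay $H,H'\to0$, which follows from the stable tail $q(x,t)\sim Cx^{-\alpha-1}$ exploited already in \eqref{fBC}. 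Every $f$ in the core $C_\gen\subseteq S_b$ has $f'(0+)=0$, so the term $f'(0)H(0)$ drops, and inserting $H'(0)=-D^{\alpha-1}_{-y}g(0)$ gives
\begin{equation*}
\gen^*\mu = D^\alpha_{-y}g(y)\,dy - D^{\alpha-1}_{-y}g(0)\,\delta_0 .
\end{equation*}

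From this I would read off both the generator and its domain. Since $\dom(\gen^\odot)\subseteq X^\odot=\BV_{ac}[0,\infty)$, every $\mu\in\dom(\gen^\odot)$ is absolutely continuous, $\mu(dy)=g(y)\,dy$ with $g\in L^1[0,\infty)$; membership in $\dom(\gen^\odot)$ additionally forces $\gen^*\mu\in X^\odot=\BV_{ac}[0,\infty)$, i.e.\ the singular part $-D^{\alpha-1}_{-y}g(0)\,\delta_0$ must vanish, which is exactly $D^{\alpha-1}_{-y}g(0)=0$; when this holds $\gen^\odot g=D^\alpha_{-y}g$, and this must lie in $L^1[0,\infty)$. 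Conversely, for $g\in L^1$ with $D^\alpha_{-y}g\in L^1$ and $D^{\alpha-1}_{-y}g(0)=0$, the displayed identity shows $g\in\dom(\gen^*)$ with $\gen^*g=D^\alpha_{-y}g\in\BV_{ac}=X^\odot$, hence $g\in\dom(\gen^\odot)$. This establishes \eqref{sundualgen} together with the stated domain.

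I expect the main obstacle to be making the integration by parts fully rigorous: one must verify that $H$ and $H'$ are well defined and vanish at infinity for the densities at hand, control the integrability of $D^\alpha_{-y}g$ and the existence of the pointwise trace $D^{\alpha-1}_{-y}g(0)$, and confirm that testing against the core $C_\gen$ rather than all of $\dom(\gen)$ loses no information. This is exactly where the regularity encoded in $S_b$ and the boundary behaviour $f'(0+)=0$ become essential, since they are what kill the $f'(0)H(0)$ term and thereby isolate the single boundary functional $D^{\alpha-1}_{-y}g(0)$.
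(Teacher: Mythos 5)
Your core computation---the fractional integration by parts giving $\gen^*\mu = D^\alpha_{-y}g(y)\,dy - D^{\alpha-1}_{-y}g(0)\,\delta_0$ for nice densities $g$, and the reading of the boundary condition as the absence of an atom at the origin---is sound, and it is essentially the paper's calculation \eqref{MMM1}. But there are two genuine gaps. First, your proof of $X^\odot\subseteq\BV_{ac}[0,\infty)$ cites Proposition~\ref{ZtTDth} for the claim that $Z_t$ started at \emph{any} point has a Lebesgue density; that proposition only treats the start $Z_0=0$. Absolute continuity of $T_t^*\delta_x$ for $x>0$ is exactly Corollary~\ref{CorTDexists}, which the paper proves \emph{after}, and \emph{by means of}, Theorem~\ref{fwdgen}---so as written this step is circular. (It is fixable without the corollary: from $Z^x_t=\max\bigl(x+Y_t,\,Y_t-\inf_{s\le t}Y_s\bigr)$ one gets $\Pp(Z^x_t\in A)\le \Pp(x+Y_t\in A)+\Pp(Z^0_t\in A)$, and both laws are absolutely continuous; but some such argument must be supplied.)

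Second, and more seriously, the step ``membership in $\dom(\gen^\odot)$ forces the singular part $-D^{\alpha-1}_{-y}g(0)\,\delta_0$ to vanish'' presupposes that $\gen^*\mu$ has your structural form for \emph{every} absolutely continuous $\mu=g\,dy$ in $\dom(\gen^\odot)$. Your integration by parts establishes that form only when $g$ is regular enough that $H=I^{2-\alpha}_{-y}g$ is twice differentiable, $D^\alpha_{-y}g$ is integrable, the trace $D^{\alpha-1}_{-y}g(0)$ exists, and the boundary terms at infinity vanish. For an arbitrary $g\in L^1[0,\infty)$ with $g\,dy\in\dom(\gen^*)$, none of this is known a priori---it is precisely what has to be proved, and you flag it as ``the main obstacle'' without giving a mechanism to overcome it. The paper's key device, absent from your proposal, is to run the duality in the opposite direction: writing $\nu=\gen^*\mu$ and $v(x)=\nu[0,x]$, using $f'=I^{\alpha-1}_x\gen f$ and integration by parts, and exploiting the density of $S=\{\gen f: f\in C_\gen\}$ in $C_\infty[0,\infty)$, one \emph{reconstructs} $\mu(dy)=g(y)\,dy$ with $g=-I^{\alpha-1}_{-y}v$ (equation \eqref{g}). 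This single step yields $\dom(\gen^*)\subseteq L^1[0,\infty)$ (hence $X^\odot\subseteq\BV_{ac}$ with no appeal to transition densities, closing your first gap) and also the needed regularity of every $g$ in the adjoint domain: when $\nu$ is itself absolutely continuous, the left-inverse property gives $v(y)=-D^{\alpha-1}_{-y}g(y)$, so $v(0)=0$ is the trace condition and $v'=D^\alpha_{-y}g$ is the generator formula. Without this (or an equivalent) reconstruction, the inclusion $\dom(\gen^\odot)\subseteq\{g: D^\alpha_{-y}g\in L^1,\ D^{\alpha-1}_{-y}g(0)=0\}$ remains unproved, and your characterization of the domain is only a one-sided inclusion.
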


\begin{proof}
Suppose $\gen^* \mu=\nu\in \BV_b[0,\infty)$ for some $\mu\in\BV_b[0,\infty)$, so that $\int \gen f(x) \mu(dx)=\int f(x)\nu(dx)$ for all $f\in \dom(\gen)$. Set $v(x):=\nu[0,x]$ for $x\ge 0$ and $v(x)=0$ for $x<0$. If $f\in S_b$ with $\caputo^\alpha_x f\in C_{\infty}[0,\infty)$, then it follows from Theorem~\ref{bwgenerator} that $f\in \dom(\gen)$ and $\gen f=\caputo^\alpha_{x}f$.  It is obvious from the Definition \eqref{Caputo} that $\caputo_x^{\alpha-1}f'(x)=\caputo^\alpha_{x}f(x)$.  Let
\[I_x^\alpha f(x)=\int_0^x\frac{(x-y)^{\alpha-1}}{\Gamma(\alpha)}f(y)\,dy \]
denote the positive Riemann-Liouville fractional integral \eqref{RLintpos} of order $\alpha>0$ for a function $f\in C_{\infty}[0,\infty)$, and apply the general formula \cite[Eq.\ (1.21)]{Bajlekova} $I^{\alpha-1}_x\partial_x^{\alpha-1}f'(x)=f'(x)-f'(0)$
to see that $f'(x)=I^{\alpha-1}_x\partial_x^{\alpha-1}f'(x)=I^{\alpha-1}_x\partial_x^{\alpha}f(x)=I^{\alpha-1}_x \gen f(x)$.  Since $f(x)\to 0$ as $x\to\infty$, and $v(0)=0$ for $x<0$, we can apply the integration by parts formula \cite[Theorem 19.3.13]{Hildebrandt}
\[\int_a^b f(x)\nu(dx)=f(b)v(b)-f(a)v(a)-\int_a^b v(x)f'(x)dx\]
with $a<0$, and then let $b\to\infty$, to see that
\[  \int_0^\infty f(x)\nu(dx)=-\int_0^\infty f'(x)v(x)\,dx .\]
Thus, for all $f\in S_b$ with $\partial^\alpha_x f\in C_{\infty}[0,\infty)$, a Fubini argument yields
 \begin{equation}\label{maincalc}
 \begin{split}
   \int_0^\infty f(x)\nu(dx)
    =&-\int_0^\infty \int_0^x\frac{(x-y)^{\alpha-2}}{\Gamma(\alpha-1)}\gen f(y)\,dy\, v(x)\,dx\\
    =&-\int_0^\infty \int_y^\infty\frac{(x-y)^{\alpha-2}}{\Gamma(\alpha-1)} v(x)\,dx\, \gen f(y)\,dy\\
    =&\int_0^\infty  \gen f(y)\,\mu(dy).
 \end{split}\end{equation}

Next we will show that $S:=\{\gen f:f\in S_b\text{ with }\partial^\alpha_x f\in C_\infty[0,\infty)\}$ is dense in $C_\infty[0,\infty)$, and then it will follow that
any measure $\mu\in \dom(\gen^*)$ has a Lebesgue density
\begin{equation}\label{g}
    g(y)=-\int_y^\infty\frac{(x-y)^{\alpha-2}}{\Gamma(\alpha-1)} v(x)\,dx=-I_{-y}^{\alpha-1}v(y)
\end{equation}
where $\gen^* \mu=\nu$ and $v(x)=\nu[0,x]$.  Let $C_c^\infty[0,\infty)$ denote the space of smooth functions with compact support, i.e., such that $h(x)=0$ for all $x>M$, for some $M>0$. It is not hard to check that the space $Q=\{h\in C_c^\infty[0,\infty):\int h=0\}$ is dense in $C_\infty[0,\infty)$. Then we certainly have
$$
    \lim_{x\to \infty} I_x^\alpha h(x)=\lim_{x\to \infty}\int_0^{M} \frac{(x-s)^{\alpha-1}-x^{\alpha-1}}{\Gamma(\alpha)}h(s)\,ds=0,
$$
and therefore, the function $f(x)=I_x^\alpha h(x)$ is an element of $C_{\infty}[0,\infty)$ for any $h\in Q$.  Elementary estimates suffice to check that $f\in S_b$ as well. Since the positive Caputo derivative is a left inverse of the positive Riemann-Liouville integral \cite[Eq.\ (1.21)]{Bajlekova}, we also have  $\partial_x^\alpha f(x)=\partial_x^\alpha I_x^\alpha h(x)=h(x)\in C_{\infty}[0,\infty)$.  Hence $h=\gen f\in S$ for all $h\in Q$, and thus $Q\subseteq S$.  Now for any $f \in C_\infty[0,\infty)$ there exists a sequence $f_n\to f$ in the supremum norm, with $f_n\in S$ for all $n$.  Then some simple estimates can be used to verify that $\int f(y)\mu(dy)=\int f(y)g(y)\,dy$, and it follows that the measure $\mu$ has the Lebesgue density $g$.    Hence we can identify $\dom(\gen^*)$ with a subspace of $L^1[0,\infty)$.

 %
Next we will show that this subspace is dense in $L^1[0,\infty)$.  Define
$$
    \phi_n(x):=\tfrac{1}{\Gamma(\alpha)} \left(\tfrac 1n -x\right)^{\alpha-1}\,\I_{[0,1/n)}(x) ,
$$
and note that $D^{\alpha-1}_{-x}\phi_n(x)\equiv 1$ for $x\in(0,1/n)$ by a straightforward computation.  Note that the set
$U:=\{g(x)-[D^{\alpha-1}_{-x}g(0)]\phi_n:g\in C_c^\infty[0,\infty), n \in \mathbb N\}$ is dense in $L^1[0,\infty)$, and that $D^\alpha_{-x}g(x)\in L^1[0,\infty)$ for all $g\in U$. 
Furthermore, $U$ is a subset of
$$
    G= \big\{ g\in L^1[0,\infty)\,:\,D^{\alpha}_{-x}g\in L^1[0,\infty), \; D_{-x}^{\alpha-1}g(0)=0\big\} .
$$
For any $g\in G$ and $f\in S_b$ with $\partial^\alpha_x f\in C_{\infty}[0,\infty)$, we have $D^{\alpha-1}_{-x}g(0)=0$ and $f'(0)=0$.   Then Theorem~\ref{bwgenerator}, a Fubini argument, and integration by parts (twice) using equation \eqref{RLneg} yields
   \begin{equation}\label{MMM1}
     \begin{split}
       \int_0^\infty \gen f(y) \,g(y)\,dy =& \int_0^\infty \int_0^y \frac{(y-x)^{1-\alpha}}{\Gamma(2-\alpha)}f''(x)\,dx\, g(y)\,dy\\
       =& \int_0^\infty\int_x^\infty \frac{(y-x)^{1-\alpha}}{\Gamma(2-\alpha)} g(y)\,dy\,f''(x)\,dx\\
 =& \int_0^\infty I^{2-\alpha}_{-x} g(x)f''(x)\,dx\\
       =& \int_0^\infty\,D_{-x}^{\alpha-1} g(x)f'(x)\,dx\\
       =& \int_0^\infty f(x)\,D_{-x}^{\alpha} g(x)\,dx.
     \end{split}
   \end{equation}
Furthermore, as $\{f\in S_b:\gen f\in C_\infty[0,\infty)\}$ is a core, for all $f\in \dom(\gen)$ there exists a sequence $f_n$ in the core such that $f_n\to f$ and $\gen f_n\to \gen f$. Hence \eqref{MMM1} holds for all $f\in \dom(\gen)$ and therefore
 $g\in \dom(\gen^*)$ and $\gen^*g=D_{-x}^\alpha g$ for any $g\in G$.  Since $U$ is dense in $L^1[0,\infty)$, and $U\subseteq G\subseteq \dom(\gen^*)$, it follows that $\dom(\gen^*)$ is dense in $L^1[0,\infty)$.  Since $C^\odot_\infty[0,\infty)$ is the smallest closed set containing $\dom(\gen^*)$ by definition, and since $L^1[0,\infty)$ is a closed subspace of $\BV_b[0,\infty)$, we have shown that $L^1[0,\infty)=C^\odot_\infty[0,\infty)$.

Equation \eqref{dualdomain} implies that $\nu=\gen^*\mu$ is an element of $C^\odot_\infty[0,\infty)$ for any $\mu\in \dom(\gen^\odot)$, and therefore, we have $\nu(dx)=h(x)\,dx$ for some $h\in L^1[0,\infty)$, as well as $\mu(dy)=g(y)\,dy$ for some $g\in L^1[0,\infty)$.  Since $v(x)=\nu[0,x]$, it follows that $h(x)=v'(x)$.  Since $D_{-y}^{\alpha-1}$ is a left inverse of $I_{-y}^{\alpha-1}$ in general, it follows from  \eqref{g} that $v(y)=-D_{-y}^{\alpha-1}g(y)$.  Then we have
\begin{equation}\label{dualgencalc}
h(x)=v'(x)=\frac d{dx}\left[ -D^{\alpha-1}_{-x}g(x)\right]=D^{\alpha}_{-x}g(x)=\gen^* g(x)
\end{equation}
for all $g\in \dom(\gen^\odot)$, which proves the generator formula \eqref{sundualgen}. Equation \eqref{dualgencalc} also shows that $D^{\alpha}_{-x}g(x)\in  L^1[0,\infty)$ for all $g\in \dom(\gen^\odot)$, and since $v$ is continuous with $v(0)=0$, it follows that $D_{-x}^{\alpha-1}g(0)=v(0)=0$ for all $g\in \dom(\gen^\odot)$.  This proves that $\dom(\gen^\odot)\subseteq G$.  Since $\dom(\gen^\odot)$ is defined as the set of $g\in \dom(\gen^*)$ such that $\gen^*g\in L^1[0,\infty)$, it follows from \eqref{MMM1} that $g\in \dom(\gen^\odot)$ for all $g\in G$, so that $G\subseteq \dom(\gen^\odot)$ as well, which completes the proof.
\end{proof}

Theorem~\ref{fwdgen} establishes the forward equation of the reflected stable process $Z_t$, for certain initial conditions.  It shows that, for any initial condition $\mu_0(dx)=g(x)dx$ where $g\in L^1[0,\infty)$, $\mu_t:=T^\odot \mu$ solves the Cauchy problem
\[\partial_t \mu(t)=\gen^\odot \mu(t);\quad \mu(0)=\mu_0\]
on the sun-dual space. This implies that, for any probability density $p_0(x)$ such that $p_0(x)=0$ for $x<0$, the function $p(x,t)=T^\odot p_0(x)$ solves the forward equation
\begin{equation}\label{FBC2}
\partial_t p(x,t)=D_{-x}^\alpha p(x,t);\quad p(x,0)=p_0(x), \quad D_{-x}^{\alpha-1} p(x,t)\Big|_{x=0} \equiv 0 .
\end{equation}


The next two results will allow us to compute the transition probability density $y\mapsto p(x,y,t)$ of the time-homogeneous Markov process $y=Z_{t+s}$ for any initial state $x=Z_s$, by applying Theorem~\ref{fwdgen} to a sequence of initial conditions $\mu_n\in X^\odot$ such that $\mu_n\to \delta_x$. The first result shows that the transition density exists.

\begin{corollary}\label{CorTDexists}
The semigroup $T^{\odot}_t$ is a strongly continuous bounded analytic semigroup on $L^1(\R_+)$  and
the transition probability distributions $T_t^*\delta_x$ have smooth densities $y\mapsto p(x,y,t)$ for all $t>0$ and all $x\ge 0$.
\end{corollary}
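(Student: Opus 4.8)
The plan is to treat the two assertions separately, drawing the analyticity on $L^1$ from the resolvent estimate already proved in Theorem~\ref{bwgenerator}, and the existence and smoothness of the transition densities from the smoothing property of analytic semigroups together with the identification of the sun-dual generator in Theorem~\ref{fwdgen}.

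First I would establish that $\{T_t^\odot\}$ is a strongly continuous bounded analytic semigroup. By Theorem~\ref{fwdgen} the sun-dual space is $X^\odot = C_\infty^\odot[0,\infty) = \BV_{ac}[0,\infty)$, which we identify with $L^1[0,\infty)$; on this space $T_t^\odot$ is strongly continuous by the very definition of the sun-dual. Its generator $\gen^\odot$ is the part of $\gen^*$ in $X^\odot$, so its resolvent is the restriction $R(\lambda,\gen^\odot) = R(\lambda,\gen^*)|_{X^\odot}$. Since $R(\lambda,\gen^*) = R(\lambda,\gen)^*$ and the adjoint preserves the operator norm, we get $\|\lambda R(\lambda,\gen^\odot)\| \le \|\lambda R(\lambda,\gen)\|$ for all $\Re\lambda > 0$. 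The sectorial bound \eqref{resEst} established in the proof of Theorem~\ref{bwgenerator} therefore passes verbatim to $\gen^\odot$, and \cite[Corollary 3.7.12]{Arendt2001} --- the same criterion used there --- yields that $\{T_t^\odot\}$ is bounded and analytic on $L^1[0,\infty)$. (Boundedness also follows from the contractivity of $\{T_t\}$, which transfers to the adjoint and to its restriction.)

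Next I would prove that $T_t^*\delta_x$ is absolutely continuous for every $t > 0$ and $x \ge 0$. The point mass $\delta_x$ lies in $X^* = \BV_b[0,\infty)$ but not in $X^\odot$, so the key point is a regularization. Because $\{T_t\}$ is analytic, $\gen T_s$ is a bounded operator for each $s > 0$, and for every $f \in \dom(\gen)$ we have
\[
    \int \gen f\, d(T_s^*\delta_x)
    = \int T_s \gen f\, d\delta_x
    = \int \gen T_s f\, d\delta_x
    = \int f\, d\big((\gen T_s)^*\delta_x\big),
\]
which shows $T_s^*\delta_x \in \dom(\gen^*)$ with $\gen^* T_s^*\delta_x = (\gen T_s)^*\delta_x$. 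Hence $T_s^*\delta_x \in \dom(\gen^*) \subseteq \overline{\dom(\gen^*)} = X^\odot = L^1[0,\infty)$ for every $s > 0$, so in particular $T_t^*\delta_x(dy) = p(x,y,t)\,dy$ for some density $p(x,\cdot,t) \in L^1[0,\infty)$.

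Finally I would upgrade $L^1$-existence to smoothness. Writing $T_t^*\delta_x = T_{t/2}^\odot\big(T_{t/2}^*\delta_x\big)$ with $T_{t/2}^*\delta_x \in L^1 = X^\odot$, the analyticity of $T^\odot$ gives $T_{t/2}^\odot(L^1) \subseteq \dom((\gen^\odot)^n)$ for every $n$; since $\gen^\odot g = D_{-y}^\alpha g$ by Theorem~\ref{fwdgen}, this means $D_{-y}^{n\alpha}\,p(x,\cdot,t) \in L^1[0,\infty)$ for all $n$. I expect the main obstacle to be converting this family of fractional-order $L^1$ bounds into genuine pointwise $C^\infty$ regularity: a bootstrap through the fractional Sobolev scale (each order $n\alpha$ with $1<\alpha<2$ buying roughly $\lfloor n\alpha\rfloor$ classical derivatives via the embedding $W^{k,1}_{\mathrm{loc}} \hookrightarrow C^{k-1}$) yields $y \mapsto p(x,y,t) \in C^\infty(0,\infty)$, with the behavior at the boundary $y = 0$ controlled through the boundary condition $D_{-y}^{\alpha-1} p(\cdot,t)(0) = 0$ built into $\dom(\gen^\odot)$.
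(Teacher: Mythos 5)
Your proposal is correct, and its overall architecture---analyticity of $T_t^\odot$ on $L^1$, regularization of $\delta_x$ into the sun-dual space, then the decomposition $T_t^*\delta_x=T_{t/2}^\odot T_{t/2}^*\delta_x$ plus analytic smoothing---matches the paper's proof; but two intermediate steps are done by genuinely different mechanisms. For analyticity, the paper transfers sectoriality from $\gen$ to $\gen^*$, so that $\gen^*$ generates a bounded analytic (not strongly continuous) semigroup on all of $\BV_b[0,\infty)$ by \cite[Theorem 3.7.1]{Arendt2001}, and then restricts to $\overline{\dom(\gen^*)}=L^1[0,\infty)$ via \cite[Remark 3.7.13]{Arendt2001}; you instead push the resolvent bound \eqref{resEst} through the adjoint and its restriction to $X^\odot$ (using $R(\lambda,\gen^\odot)=R(\lambda,\gen^*)|_{X^\odot}$ and norm-preservation of adjoints) and reapply \cite[Corollary 3.7.12]{Arendt2001} directly on $L^1$; this is equivalent and equally rigorous. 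More substantively, your regularization step is more elementary and self-contained: the paper invokes \cite[Remark 3.7.20]{Arendt2001} for the adjoint semigroup on $\BV_b[0,\infty)$ to conclude $T_s^*\mu\in\dom((\gen^*)^n)$ for \emph{every} $\mu\in\BV_b[0,\infty)$, whereas you use only the boundedness of $\gen T_s$ (analyticity of the backward semigroup from Theorem~\ref{bwgenerator}) and the commutation $T_s\gen f=\gen T_s f$ to show $T_s^*\delta_x\in\dom(\gen^*)\subset L^1[0,\infty)$ by a clean duality computation, thereby avoiding any discussion of non-strongly-continuous analytic semigroups on the measure space (at the cost of proving the statement only for $\delta_x$, which is all the corollary needs). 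The one soft spot is your final passage from $D^{n\alpha}_{-y}\,p(x,\cdot,t)\in L^1$ for all $n$ to smoothness, which you leave as a Sobolev-embedding heuristic; the paper closes this gap with a purely algebraic device that you should adopt: write any integer $m$ as $m=n\alpha-\beta$ with $n\geq 1$ and $0<\beta<\alpha$, apply $I^\beta_{-y}$ to $(D^\alpha_{-y})^n p=D^{n\alpha}_{-y}p\in L^1[0,\infty)$ to obtain $D^m_{-y}p=(-1)^m(d/dy)^m p\in L^1[0,\infty)$ for every $m$, whence the density is smooth.
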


\begin{proof}
It is well-known that the spectra of $\gen$ and $\gen^*$ coincide, and $R(\lambda,\gen^*)=R(\lambda, \gen)^*$
for all $\lambda$ in the resolvent set of $\gen$ (and $\gen^*$). Therefore, since $\gen$ is a sectorial operator, being the generator of a bounded analytic semigroup, it follows that $\gen^*$ is a sectorial operator as well, and hence $\gen^*$ generates a bounded analytic semigroup (not necessarily strongly continuous at $0$) on $C^*_\infty[0,\infty) = \BV_b[0,\infty)$ by \cite[Theorem 3.7.1]{Arendt2001} which coincides with $T_t^*$ for all $t>0$ by the uniqueness of the Laplace transform. Therefore, the restriction of $\gen^*$ to $\overline{\dom(\gen^*)}=L^1[0,\infty)$ (i.e., the operator $\gen^{\odot}$) generates a strongly continuous bounded analytic semigroup $T_t^{\odot}$ on $L^1[0,\infty)$ by \cite[Remark 3.7.13]{Arendt2001}. Furthermore, \cite[Remark 3.7.20]{Arendt2001} shows that for all $n\in \N$ and $t>0$ we have that $T^*_t\mu\in \dom((\gen^*)^n)$ for all $\mu\in\BV_b[0,\infty)$ and that $T^{\odot}_tf\in \dom((\gen^{\odot})^n)$ for all $f\in L^1[0,\infty)$.
Since $\dom(\gen^*)\subset L^1[0,\infty)$, it follows that $T^*_s\mu\in L^1[0,\infty)$ for all $s>0$ and $\mu\in\BV_b[0,\infty)$. Therefore for all $t>0$ and $\mu\in \BV_b[0,\infty)$ we have
$$
    T^*_t\mu=T^*_{\frac{t}{2}}T^*_{\frac{t}{2}}\mu=T^{\odot}_{\frac{t}{2}}T^*_{\frac{t}{2}}\mu \in \dom((\gen^{\odot})^n)
$$
 for all $n\in \N$. Thus by taking $\mu=\delta_x$ it follows that $T^*_t\delta_x\in \dom((\gen^\odot)^n)$ for all $n$. Using Theorem~\ref{fwdgen}, we have that $\dom(\gen^\odot)=\{g\in L^1[0,\infty):D^\alpha_{-y}g(y)\in L^1[0,\infty),D^{\alpha-1}_{-y}g(0)=0\}$, and then it follows that $\left(D^\alpha_{-y}\right)^n T^*_t\delta_x\in L^1[0,\infty)$ for all $n$.  In particular, the transition probability distribution $T_t^*\delta_x$ has a density function $y\mapsto p(x,y,t)$ for all $t>0$ and all $x\ge 0$.  To see that this function is smooth note that any positive integer $m$ can be written in the form $m=n\alpha-\beta$ for some integer $n\geq 1$ and some positive real number $\beta<\alpha$.  A straightforward calculation shows that for $f\in \dom((\gen^\odot)^n)$ we have $\left(D^\alpha_{-y}\right)^n f=D^{n\alpha}_{-y}f$ and $I_{-y}^\beta D^{n\alpha}_{-y}f=D^{n\alpha-\beta}_{-y}f\in L^1[0,\infty)$ for all $\beta<\alpha$ and $n\ge 1$.
Since $D^m_{-y}f=(-1)^m (d/dy)^m f$, we have
 $(d/dy)^m p(x,y,t)\in L^1[0,\infty)$ for all $m$.
\end{proof}

\begin{corollary}\label{weaksoln}
    Let $\{\mu_n\}\subset C_\infty^\odot[0,\infty) = \BV_{ac}[0,\infty)$ such that $\mu_n\to \mu$ vaguely as $n\to\infty$ for some $\mu\in \BV_b[0,\infty)$, then $T_t^\odot \mu_n\to T_t^*\mu$ vaguely as $n\to\infty$.
\end{corollary}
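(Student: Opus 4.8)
The plan is to reduce everything to the duality identity \eqref{dualdef} together with the defining property of vague convergence, namely that $\int u\,d\mu_n\to\int u\,d\mu$ for every test function $u\in C_\infty[0,\infty)$. The only structural facts I will need are that on $X^\odot=\BV_{ac}[0,\infty)$ the sun-dual semigroup is by definition the restriction of the dual semigroup, so that $T_t^\odot\mu_n=T_t^*\mu_n$ for each $n$, and that the backward semigroup $T_t$ is Feller, hence maps $C_\infty[0,\infty)$ into itself (Theorem~\ref{bwgenerator}).

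First, I fix $t>0$ and an arbitrary test function $u\in C_\infty[0,\infty)$. Using $T_t^\odot\mu_n=T_t^*\mu_n$ and then the duality \eqref{dualdef} with $f=u$ applied to the measure $\mu_n$, I rewrite
\begin{equation*}
\int u(y)\,[T_t^\odot\mu_n](dy)=\int u(y)\,[T_t^*\mu_n](dy)=\int T_tu(x)\,\mu_n(dx).
\end{equation*}
This pushes the semigroup action onto the (fixed) test function, replacing $u$ by the new test function $T_tu$. Because $Z_t$ is a Feller process, $T_tu\in C_\infty[0,\infty)$, so $T_tu$ is itself an admissible test function for vague convergence. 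Hence the hypothesis $\mu_n\to\mu$ vaguely yields
\begin{equation*}
\int T_tu(x)\,\mu_n(dx)\;\longrightarrow\;\int T_tu(x)\,\mu(dx)=\int u(y)\,[T_t^*\mu](dy),
\end{equation*}
where the last equality is again \eqref{dualdef}, now applied to the limiting measure $\mu$. Chaining the two displays gives $\int u\,d(T_t^\odot\mu_n)\to\int u\,d(T_t^*\mu)$ for every $u\in C_\infty[0,\infty)$, which is exactly the assertion $T_t^\odot\mu_n\to T_t^*\mu$ vaguely.

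There is essentially no hard analytic step: the whole argument is the observation that vague convergence is preserved under the dual semigroup precisely because the predual (backward) semigroup leaves the test space $C_\infty[0,\infty)$ invariant. The one point to check with care is that $T_t^\odot$ and $T_t^*$ genuinely agree on each $\mu_n$, i.e.\ that $\mu_n$ lies in $X^\odot=\BV_{ac}[0,\infty)$ so that the sun-dual evolution is legitimately given by the full dual semigroup; this is guaranteed by the hypothesis $\mu_n\in C_\infty^\odot[0,\infty)$ together with the identification $C_\infty^\odot[0,\infty)=\BV_{ac}[0,\infty)$ from Theorem~\ref{fwdgen}. I note that no uniform-in-$n$ bound on the total masses $\|\mu_n\|$ is needed, since the convergence is tested against one fixed $T_tu$ at a time rather than uniformly over the unit ball of $C_\infty[0,\infty)$.
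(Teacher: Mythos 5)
Your proposal is correct and follows essentially the same argument as the paper's proof: both use $T_t^\odot\mu_n=T_t^*\mu_n$, the duality \eqref{dualdef} to move the semigroup onto the test function, the Feller property to keep $T_t u$ in $C_\infty[0,\infty)$, and the vague convergence hypothesis to pass to the limit. Your remarks making explicit the role of the Feller invariance and the absence of any need for uniform mass bounds are accurate glosses on what the paper leaves implicit.
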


\begin{proof}
Since $T_t^\odot \mu_n=T_t^*\mu_n$ for all $n\geq 1$, for all $\phi\in C_\infty[0,\infty)$ we have
  \begin{equation}\begin{split}\int \phi(x) [T_t^\odot \mu_n](dx)=&\int \phi(x) [T_t^*\mu_n](dx)=\int [T_t\phi](x) \mu_n(x)dx\\
  &\xrightarrow{n\to\infty}\int [T_t\phi](x)\mu(dx) =\int \phi(x) [T_t^* \mu](dx),\end{split}\end{equation}
and the result follows.
\end{proof}

In Section~\ref{sec5}, we will apply Corollary~\ref{weaksoln} to compute the transition densities of the reflected stable process to any desired degree of accuracy, by solving the forward equation numerically.  For any initial state $Z_s=x$, we will approximate the initial condition $\mu_0=\delta_x$ in the numerical method by a sequence of measures $\mu_n$ with $L^1$-densities, and then Corollary~\ref{weaksoln} guarantees that the resulting solutions converge to the transition density in the supremum norm as $n\to\infty$.

\begin{remark}
Using integration by parts, one can write the backward generator in the form of an integro-differential operator (e.g., see Jacob \cite{Jacob})
\begin{equation}\label{Lpseudo}
\gen f(x)= b(x)f'(x)+\int \left[f(x+y)-f(x)-yf'(x)\right] \phi(x,dy) ,
\end{equation}
with coefficients
\begin{equation}\begin{split}\label{characteristics}
b(x)&=\frac {x^{1-\alpha}}{\Gamma(2-\alpha)}\quad\text{and}\\
\phi(x,dy)&=\frac {\alpha(\alpha-1)}{\Gamma(2-\alpha)}|y|^{-1-\alpha}dy \I_{(-x,0)}(y) +\frac {\alpha-1}{\Gamma(2-\alpha)}x^{-\alpha}\varepsilon_{-x}(dy) .
 \end{split}\end{equation}
The jump intensity $\phi(x,dy)$ describes the behavior of the process $Z_t$, which truncates jumps of the stable process $Y_t$ starting at the point $x>0$ in the state space, so that a jump (they are all negative) of size $|y|>x$ is changed to a jump of size $x$.  This keeps the sample paths of $Z_t$ inside the half-line $[0,\infty)$.  Since the drift $b(x)$ is unbounded, the existence of a Markov process $Z_t$ with generator \eqref{Lpseudo} would not follow from general theory (e.g., see \cite[Section 4.5]{Ethier} or \cite[Section 3]{Schilling98}). Hence the reflected stable process $Z_t$ is an interesting example of a Markov process with unbounded drift coefficient.
\end{remark}

\section{Transition density of the reflected stable process}\label{sec5}

To the best of our knowledge, there is no known analytical formula for the transition density $y\mapsto p(x;t,y)$ of the reflected stable process $y=Z_{t+s}$ started at $x=Z_s>0$.  In this section, we compute and plot this transition density, by numerically solving the associated forward equation \eqref{FBC2}.  The existence and smoothness of these transition densities is guaranteed by Corollary~\ref{CorTDexists}.  Corollary~\ref{weaksoln} shows that $T^\odot_t g_n(y)dy\to T^*_t\delta_x(dy)$ in the supremum norm as $n\to\infty$ for any sequence of functions $g_n\in L^1[0,\infty)$ such that $g_n(y)dy\to \delta_x(y)dy$ (vague convergence).  We take $g_n(y)=n\ \I_{[x,x+1/n]}(y)$.  Then the solutions $T^\odot_t g_n$ provide estimates of the transition density to any desired degree of accuracy.  Theorem~\ref{fwdgen} shows that $T^\odot_t g_n$ solves the forward equation \eqref{FBC2}.  Hence, we can compute the transition densities of the stable process by solving the forward equation numerically, with this initial condition.

In order to compute the probability density $p(x,y,t)$ numerically, we consider the fractional boundary value problem
\begin{equation}
  \label{transDE}
  \partial_t u(y,t)=D_{-y}^\alpha u(y,t);\quad u(y,0)=\delta_x(y);\quad D^{\alpha-1}_{-y}u(0,t)=0 .
\end{equation}
We develop forward-stepping numerical solutions $u_h(y_i,t)$ that estimate $u(y_i,t)$ at locations $y_i=ih$ for $i=0,1,\ldots,N$ over an interval $[0,y_{max}]$ in the state space, where $y_{max}$ is chosen large enough and $h$ is chosen small enough so that enlarging the domain further, or making the step size smaller, has no appreciable effect on the computed solutions (e.g., the resulting graph does not visibly change).  We approximate the delta function initial condition $u(y,0)=\delta_x(y)$ by setting $u_h(y_i,0)=1/h$ for $y_i=x$ and $u_h(y_i,0)=0$ otherwise, a numerical representation of the initial condition $g_n(y)=n\ \I_{[x,x+1/n]}(y)$ with $h=1/n$.

Numerical methods for fractional differential equations are an active area of research.  One important finding \cite{frade,2sided} is that a shifted version of the Gr\"unwald finite difference formula \eqref{PosGrunwald} for the fractional derivative is required to obtain a stable, convergent method.  Hence we approximate
\begin{equation}\label{MMMxxx}
    D_{-y}^\alpha u(y_i,t)\approx\frac 1{h^\alpha}\sum_{k=0}^{N+1-i} w^\alpha_k  u_h(y_{i+k-1},t)
    \quad\text{where}\quad
    w^\alpha_k := (-1)^k\binom\alpha k
\end{equation}
for $i\geq 2$. Note that this approximation of $D_{-y}^\alpha u(y_i,t)$ does not depend on the value of $u_h$ at the boundary $y_0=0$.   We enforce the boundary condition at $y_0=0$ at each step; i.e., we set
$$u_h(y_0,t)=-\sum_{k=1}^{N}w_k^{\alpha-1}u_h(y_k,t)$$
so that
\[D_{-y}^{\alpha-1} u(y_0,t)\approx \sum_{k=0}^{N}w_k^{\alpha-1}u_h(y_k,t)=0\]
since $w_0^{\alpha-1}=1$.
Finally, for $i=1$ we approximate
\begin{equation}\begin{split}
D^\alpha_{-y} u(y_1,t)\approx&\frac1{h^\alpha}\sum_{k=0}^{N}w^\alpha_k u_h(y_{1+k-1},t)\\
=& \frac1{h^\alpha}\left(\sum_{k=1}^{N}w_k^{\alpha}u_h(y_{k},t)-\sum_{k=1}^{N}w_k^{\alpha-1}u_h(y_k,t)\right)\\
=&-\frac1{h^\alpha}\sum_{k=1}^{N}w_{k-1}^{\alpha-1}u_h(y_k,t)
\end{split}
\end{equation}
using an elementary identity for fractional binomial coefficients $w_k^\alpha-w_k^{\alpha-1}=-w_{k-1}^{\alpha-1}$.

This leads to the following linear system of ordinary differential equations,
\begin{equation}\label{numericPDE}
  \frac{d}{dt} \begin{pmatrix}
    u_h(y_1,t)\\
    \vdots\\
    \vdots\\
    \vdots\\
    u_h(y_N,t)
  \end{pmatrix}
  = \frac 1{h^\alpha}\begin{pmatrix}
    -1 & \alpha-1 & \ldots & \ldots & -w_{N-1}^{\alpha-1}\\
    1 & -\alpha & w_2^\alpha & \ldots & w_{N-1}^{\alpha} \\
    0 & 1 & \ddots & \ddots & w_{N-2}^\alpha\\
    \vdots & \ddots & \ddots& \ddots & \vdots\\
    0 & \ldots & 0 & 1 & -\alpha
  \end{pmatrix}
   \begin{pmatrix}
    u_h(y_1,t)\\
    \vdots\\
    \vdots\\
    \vdots\\
    u_h(y_N,t)
  \end{pmatrix}
\end{equation}
whose solution can be approximated using any numerical ODE solver (indeed, this is a linear system of the form $u'=Au$, so it has a solution $u(t)=e^{tA}u(0)$ for any initial condition).
The resulting numerical solutions with $y_{\max}=12$, $h=0.01$, starting points $x=0,1,2,4$, and $\alpha =1.2,1.8$ are depicted in Figures~\ref{fig1_2} and~\ref{fig1_8}, where each frame represents a snapshot at times $t=0.5,1$ and $2$ respectively.  The $L^1$-error of the numerical solutions for $x=0$
decays linearly with $h$. For $h=0.01$ the $L^1$-error is less than $0.05$ for $\alpha=1.2$, and less than $0.004$ for $\alpha=1.8$, for every case plotted. A short MATLAB code to compute the numerical solution is included in the Appendix.

\begin{figure}
 \includegraphics[width=16cm]{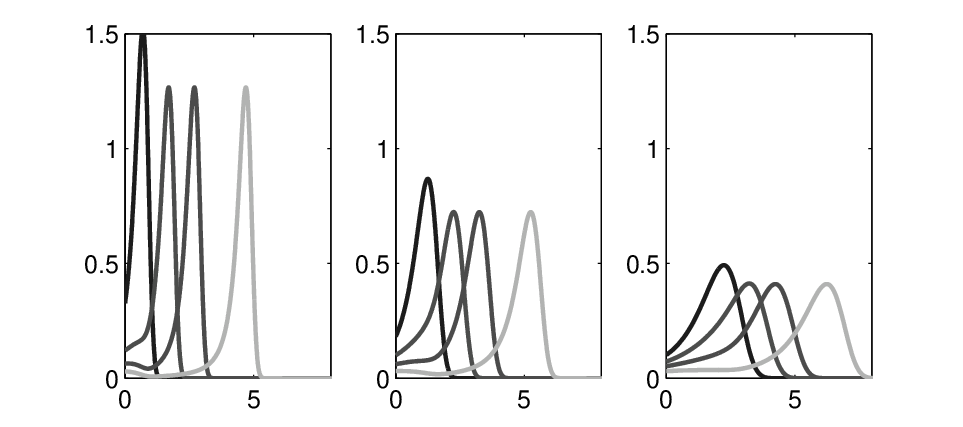}\\
  \caption{The transition densities of $p(x,y,t)$ with $\alpha=1.2$, $x=0,1,2,4$ (left to right) at times $t=0.5$ (left panel), $t=1$ (middle panel), and $t=2$ (right panel). }\label{fig1_2}

\end{figure}

\begin{figure}
  \includegraphics[width=16cm]{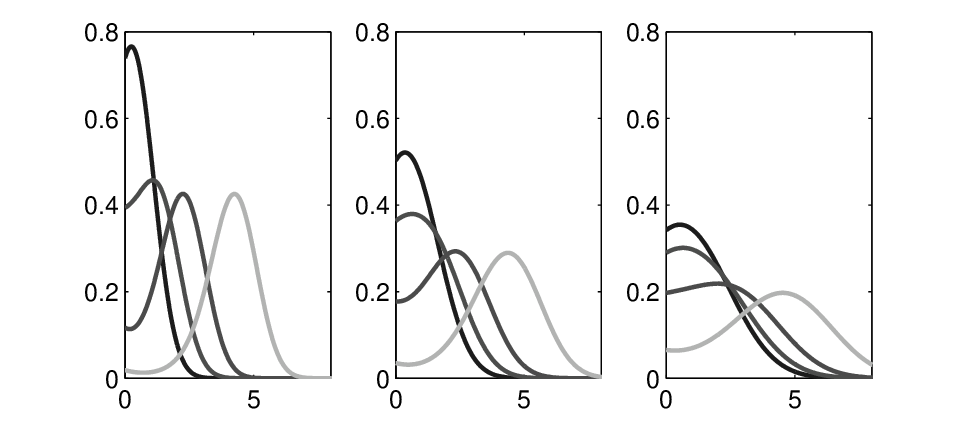}\\
  \caption{The transition densities of $p(x,y,t)$ with $\alpha=1.8$, $x=0,1,2,4$ (left to right) at times $t=0.5$ (left panel), $t=1$ (middle panel), and $t=2$ (right panel). }\label{fig1_8}
\end{figure}

\begin{remark}
It is interesting to note that the matrix in \eqref{numericPDE} is essentially the rate matrix of a discrete state Markov process in continuous time.  Extending the state space to $N=\infty$, we obtain a Markov process  $Z^h_t$ on the state space $\{ih:i>0\}$ that approximates the reflected stable process, with $u_h(x_i,t)=\Pp(Z^h_t=ih)$.  The transition rate from state $ih$ to state $jh$ for $i>j>1$ is $w^\alpha_{i-j+1}\approx \alpha(\alpha-1) (i-j)^{-\alpha-1}/\Gamma(2-\alpha)$, the jump intensity of the stable process $Y_t$, in view of \cite[Eq.\ (2.5)]{FCbook}.   The transition rate from state $ih$ for $i>1$ to state $jh$ for $j=1$, in the first row of the rate matrix, is $w^{\alpha-1}_{i-1}\approx i^{-\alpha}/\Gamma(1-\alpha)$, the rate at which the process $Y_t$ would jump into the negative half-line.   This can be computed as $\phi(-\infty,-ih)$ where $\phi(dy)=\alpha(\alpha-1)|y|^{-1-\alpha}dy/\Gamma(2-\alpha)$ is the L\'evy measure of the process $Y_t$, e.g., see \cite[Proposition 3.12]{FCbook}.
\end{remark}

\begin{remark}
As noted in the introduction, the fractional boundary condition in \eqref{transDE} is a natural extension of the boundary condition \eqref{rBmBC} for Brownian motion on the half-line.  The fractional boundary condition in \eqref{transDE} can be written in Gr\"unwald finite difference form using \cite[Proposition 2.1]{FCbook} to arrive at \eqref{PosGrunwald}.
\end{remark}

\begin{remark}
Bernyk, Dalang and Peskir \cite[Appendix]{Bernyk2011} computed the backward generator of a general reflected stable L\'evy process.  Caballero and Chaumont \cite[Theorem 3]{CC} compute the backward generator of a killed stable L\'evy process.  It may be possible to develop the forward equation and compute the transition density for those process, using the methods of this paper.  This would be interesting for applications to fractional diffusion, since it could elucidate the relevant fractional boundary conditions.
\end{remark}

\section{Fractional Cauchy problems}
In this section, we show that the reflected stable process \eqref{ZtDef} with index $1<\alpha\leq 2$ can be used as a time change to solve the fractional Cauchy problem
\begin{equation}\label{fCp}
\partial_t^\beta p(x,t)=L p(x,t);\quad p(x,0)=f(x)
\end{equation}
 of order $\beta=1/\alpha$, when $L$ generates a Feller process.  The time-fractional Caputo derivative in \eqref{fCp} is defined by
\begin{equation}\label{CaputoDvt}
    \caputo^\beta_t f(t) =\frac{1}{\Gamma(1-\beta)}\int_{0}^\infty f'(t-r) r^{-\beta}dr ,
\end{equation}
a special case of \eqref{Caputo}.  Fractional Cauchy problems are useful in a wide variety of practical applications \cite{GorenfloSurvey,Metzler2000,MetzlerKlafter,scalas1}, and the next result allows a Markovian particle tracking solution for such problems \cite{MRMT,ParticleTracking,timeLangevin}.

\begin{theorem}\label{MPsubordTh}
For any $\beta\in[1/2,1)$, let $Z_t$ be given by \eqref{ZtDef}, where $Y_t$ is an $\alpha$-stable L\'evy process with characteristic function \eqref{YtCF} for $\alpha = 1/\beta$. If $X_t$ is an independent Markov process such that $T_t f(x)=\Ee^x[f(X_t)]$ forms a uniformly bounded, strongly continuous semigroup with generator $L$ on some Banach space ${\mathbb B}$ of real valued functions, then $p(x,t)=\Ee^x[f(X_{Z_t})]$ solves the fractional Cauchy problem \eqref{fCp} for any $f\in\dom(L)$, the domain of the generator.
\end{theorem}

\begin{proof}
Theorem 3.1 in \cite{fracCauchy} states that if $u(x,t)$ solves the Cauchy problem $\partial_t u(x,t)=Lu(x,t)$; $u(x,0)=f(x)$ on ${\mathbb B}$ for some $f\in{\rm Dom}(L)$, then the solution to the associated fractional Cauchy problem \eqref{fCp} on ${\mathbb B}$ is given by
\begin{equation}\label{fCp-soln}
    p(x,t)=\int_0^\infty u(x,r)h(r,t)\,dr
\end{equation}
where
\begin{equation}\label{Etdens}
    h(r,t)=\frac{t}{\beta}r^{-1-1/\beta}\,g_\beta(tr^{-1/\beta})
\end{equation}
and $g_\beta(t)$ is the stable probability density function with Laplace transform
\[\LL g(s)=\int_0^\infty e^{-st} g_\beta (t)\,dt=e^{-s^\beta}\]
for some $0<\beta<1$.  Since $Y_t$ has no positive jumps, the first passage time $D_t:=\inf\{r>0:Y_r>t\}$ is a stable subordinator with $\Ee[e^{-sD_t}]=\exp(-ts^{1/\alpha})$, and the supremum process $S_t=\sup\{Y_r:0\leq r\leq t\}$ is also the first passage time $E_t=\inf\{u>0:D_u>t\}$ of $D_t$, see Bingham \cite{Bingham73}.   Note that $\Pp(D_{E_t}>t)=1$ \cite[Theorem III.4]{Bertoin}. Apply \cite[Corollary 3.1]{limitCTRW} (or Exercises 29.7 and 29.18 in Sato \cite{Sato}) to see that \eqref{Etdens} is also the probability density of $E_t$.   It follows from \cite[Section VI.1, Prop. 3]{Bertoin} that
\begin{equation}\label{EqFluct}
\Pp(S_t \ge x)= \Pp(Z_t \ge x)\quad\text{for all $t>0$ and all $x>0$.}
\end{equation}
Hence \eqref{Etdens} is also the probability density of $Z_t$, and the theorem follows.
\end{proof}

\begin{remark}
An alternate proof of Theorem~\ref{MPsubordTh} uses the reflection principle for spectrally negative stable L\'evy processes.  Extending the usual argument for reflected Brownian motion, let $\tau_x=\inf\{u>0:Y_u> x\}$.  Since $Y_t$ is self-similar, we have $\Pp(Y_t>0)=1/\alpha$ for every $t>0$ (e.g., see \cite[Theorem 4.1 (i)]{duality}). Defining $Y^x_t:=Y_{t-\tau_x}-x$ for $t\geq \tau_x$, we have
\begin{equation*}\begin{split}
\Pp(S_t\geq x)&=\Pp(S_t\geq x,Y_t> x)+\Pp(S_t\geq x,Y_t\leq x)\\
&=\Pp(Y_t> x)+\Pp(\tau_x\leq t,Y^x_t\leq 0)
\end{split}\end{equation*}
and since $\Pp(\tau_x\leq t,Y^x_t\leq 0)=(1-\alpha^{-1})\Pp(S_t\geq x)$ it follows that
    \begin{equation}\label{Andre2}
\Pp(S_t \ge x)=\alpha \Pp(Y_t > x)=\Pp(Y_t > x | Y_t \ge 0) \quad\text{for all $t>0$ and all $x>0$.}
\end{equation}
An application of Zolotarev duality for stable densities \cite[Theorem 4.1 (ii)]{duality} implies
\begin{equation}\label{dualityEq}
\Pp(E_t > x) = \Pp(Y_t > x | Y_t \ge 0) \quad\text{for all $t>0$ and all $x>0$.}
\end{equation}
Then the theorem follows using \eqref{EqFluct}, \cite[Theorem 3.1]{fracCauchy} and \cite[Corollary 3.1]{limitCTRW}.
\end{remark}

\begin{remark}
Theorem~\ref{MPsubordTh} confirms a conjecture in the paper \cite[Remark 5.2]{duality}.  There we set $Z_t=Y_{\sigma(t)}$ where $\sigma(t)=\inf\{u>0:H_u>t\}$ and $H_u=\int_0^u \I_{Y_s>0}\,ds$.  In essence, the negative excursions are cut away, and the positive excursions are joined together without any gaps in time.  Since $Y_t$ has no positive jumps, any up-crossing at the origin is a renewal point, so this process has the same distribution as \eqref{ZtDef}.
\end{remark}

\section{Resolvent estimates}
In this section, we develop bounds on the norm of the resolvent used in Theorem \ref{bwgenerator}.  Note that both terms in the formula \eqref{Mittag-Leffler} for the resolvent $R(\lambda,A)$  diverge as $x\to\infty$.  For example, it follows directly from \cite[Eq. (6.4)]{Haubold} that $E_\alpha(\lambda x^\alpha)\sim \alpha^{-1} e^{\lambda^{1/\alpha}x}$ as $x\to\infty$.  Hence it is useful to begin by establishing an alternative representation.  Recall that $Y_t$ is a negatively skewed stable process with index $1<\alpha\leq 2$ and characteristic function \eqref{YtCF}.  Let $g_\alpha(x)$ denote the probability density function of $-Y_1$, a totally positively skewed stable law taking values on the entire real line.

\begin{lemma}\label{LemResolvent1}
For any $g\in C_\infty[0,\infty)$ and any $\Re\lambda>0$ we have
\begin{equation}\begin{split}\label{resStable}
  R(\lambda,A)g(x)=&\int_0^\infty\int_0^\infty e^{-\lambda t}\frac{1}{t^{1/\alpha}}g_\alpha\left(\frac{x-\xi}{t^{1/\alpha}}\right) g(\xi)\,dt\,d\xi\\
  &+\frac{\mathcal L g(\lambda^{1/\alpha})}{\lambda^{1-1/\alpha}}\int_0^\infty e^{-\lambda t}\frac{x}{\alpha t^{1+1/\alpha}}g_\alpha\left(\frac{x}{t^{1/\alpha}}\right)\,dt .
  \end{split}
\end{equation}
where $\mathcal L g$ is the usual Laplace transform.
\end{lemma}

\begin{proof}

First we will show that the right-hand side of \eqref{resStable} vanishes for any $x<0$.  Recall that $g_\alpha(x)$ is a standard positively skewed stable density with index $1<\alpha\leq 2$ and characteristic function $\exp((-ik)^\alpha)$, and hence $g_\alpha(-x)$ is a standard negatively skewed stable density with index $1<\alpha\leq 2$ and characteristic function $\exp((ik)^\alpha)$, i.e., the density of $Y_1$ in  \eqref{YtCF}.  For $0<\beta<1$, let $g_\beta(x)$ denote the standard positively skewed stable density with characteristic function $\exp(-(-ik)^\beta)$.  Since this density is supported on the positive real line, we can also write
\begin{equation}\label{dualityLem1gLT}
\mathcal L g_\beta(s)=\exp(-s^\beta)\quad\text{for all $\Re s>0$,}
\end{equation}
(e.g., see \cite[Lemma 2.2.1]{zolotarev-61}).  Apply the Zolotarev Duality Theorem for stable densities \cite[Theorem 2.3.1]{zolotarev-61} to see that when $\beta=1/\alpha$ we have
\begin{equation}\label{dualityWeird}
g_\alpha(-x)=x^{-1-\alpha}g_{\beta}(x^{-\alpha})\quad\text{for all $x>0$}.
\end{equation}
Then for $x<0$, the first term in \eqref{resStable} is
 \begin{equation*}\begin{split}
    \int_{-\infty}^x\int_0^\infty & e^{-\lambda t}\frac{1}{t^{1/\alpha}} g_\alpha\left(\frac{u}{t^{1/\alpha}}\right) g(x-u)\,dt\,du \\
    =& \int_{-\infty}^x \int_0^\infty  e^{-\lambda t}\frac{1}{t^{1/\alpha}} \left(-\frac{u}{t^{1/\alpha}}\right)^{-1-\alpha} g_{1/\alpha}\left(\left(-\frac{u}{t^{1/\alpha}}\right)^{-\alpha}\right) g(x-u)\,dt\,du \\
      =& -\int_{-\infty}^x \int_0^\infty  \frac{d}{d\lambda}\left[e^{-\lambda t}\right]\frac{1}{(-u)^{1+\alpha}} g_{1/\alpha}\left(\frac{t}{(-u)^{\alpha}}\right)\,dt\, g(x-u)\,du \\
         =&  \int_{-\infty}^x \frac{1}{u} \frac{d}{d\lambda}\int_0^\infty  e^{-\lambda t}\frac{1}{(-u)^{\alpha}} g_{1/\alpha}\left(\frac{t}{(-u)^{\alpha}}\right)\,dt\, g(x-u)\,du .
   \end{split}
\end{equation*}
Using \eqref{dualityLem1gLT} we have
\begin{equation}\label{dualityLem1eq1}
\int_0^\infty  e^{-\lambda t}\frac{1}{(-u)^{\alpha}} g_{1/\alpha}\left(\frac{t}{(-u)^{\alpha}}\right)\,dt=\exp(-[\lambda(-u)^\alpha]^\beta)=\exp(u\lambda^{1/\alpha}),
\end{equation}
and then the first term in \eqref{resStable} equals
 \begin{equation*}\begin{split}
     \int_{-\infty}^x \frac{1}{u} \frac{d}{d\lambda} \left[e^{u\lambda^{1/\alpha}}\right]\, g(x-u)\,du
     =&\frac{1}{\alpha }\lambda^{1/\alpha-1}\int_0^\infty e^{(x-y)\lambda^{1/\alpha}}g(y)\,dy\\
         =&\frac{1}{\alpha }\lambda^{1/\alpha-1}e^{x\lambda^{1/\alpha}} \mathcal L g(\lambda^{1/\alpha})\\
     =&\frac{\mathcal L g(\lambda^{1/\alpha})}{\alpha\lambda^{1-1/\alpha}}\int_0^\infty e^{-\lambda t}\frac{1}{(-x)^\alpha}g_{1/\alpha}\left(\frac{t}{(-x)^\alpha}\right)\,dt\\
     =&\frac{\mathcal L g(\lambda^{1/\alpha})}{\lambda^{1-1/\alpha}}\int_0^\infty e^{-\lambda t}\frac{(-x)}{\alpha t^{1+1/\alpha}}g_\alpha\left(\frac{x}{t^{1/\alpha}}\right)\,dt
  \end{split}
\end{equation*}
where we have used \eqref{dualityLem1eq1} again the next-to-last line.  Since the last line above is the negative of the second term in \eqref{resStable}, it follows that the sum of these two terms vanishes for all $x<0$.

Since the positively skewed stable density $g_\alpha(x)$ tends to zero at a super-exponential rate as $x\to-\infty$ (e.g., see \cite[Theorem 2.5.2]{zolotarev-61}), its bilateral Laplace transform $\LL_{-\infty}[g_\alpha](s)$ is well defined for all $s>0$, and in fact we can write
\begin{equation}\label{dualityLem1eq2}
\int_{-\infty}^\infty e^{-sx}\frac{1}{t^{1/\alpha}}g_\alpha\left(\frac{x}{t^{1/\alpha}}\right) \,dx=e^{ts^\alpha}\quad\text{for all $t>0$ and all $s>0$.}
\end{equation}
Then it follows that
\begin{equation}\label{dualityLem1eq2a}
\int_0^\infty e^{-\lambda t}\int_{-\infty}^\infty e^{-sx}\frac{1}{t^{1/\alpha}}g_\alpha\left(\frac{x}{t^{1/\alpha}}\right) \,dx\,dt=\frac{1}{\lambda-s^\alpha} \quad\text{for all $s>0$ and all $\Re\lambda>0$.}
\end{equation}
Then for any $g\in C_\infty[0,\infty)$ the convolution property of the bilateral Laplace transform implies that the first term in \eqref{resStable} satisfies
\begin{equation}\label{dualityLem1eq3}
\int_{-\infty}^\infty e^{-sx}\int_{-\infty}^\infty\int_0^\infty e^{-\lambda t}\frac{1}{t^{1/\alpha}}g_\alpha\left(\frac{x-\xi}{t^{1/\alpha}}\right) g(\xi)\,dt\,d\xi\,dx=\frac{\tilde g(s)}{\lambda-s^\alpha}
\end{equation}
for all $s>0$ and all $\Re\lambda>0$.

As for the second term, in view of the fact that the complex contour integral
 \begin{equation*}
     \int_{\lambda}^\infty \frac{d}{ds}\left[\frac{1}{u-s^\alpha} \right]\,du=\frac{\alpha s^{\alpha-1}}{\lambda-s^\alpha}
\end{equation*}
(integrate along $\{\lambda+r:r>0\}$) it follows using \eqref{dualityLem1eq2a} that
 \begin{equation*}\begin{split}
\frac{s^{\alpha-1}}{\lambda-s^\alpha}
&=\frac1\alpha \int_{\lambda}^\infty \frac{d}{ds}\left[\int_0^\infty e^{-u t}\int_{-\infty}^\infty e^{-sx}\frac{1}{t^{1/\alpha}}g_\alpha\left(\frac{x}{t^{1/\alpha}}\right) \,dx\,dt\right]\,du\\
&=\int_{-\infty}^\infty e^{-sx}\int_0^\infty \left(\frac{-x}\alpha\right)\frac{1}{t^{1/\alpha}}g_\alpha\left(\frac{x}{t^{1/\alpha}}\right) \left[ \int_{\lambda}^\infty e^{-u t}\,du\right] \,dt\,dx\\
&=-\int_{-\infty}^\infty e^{-sx}\int_0^\infty e^{-\lambda t}\left(\frac{x}\alpha\right)\frac{1}{t^{1+1/\alpha}}g_\alpha\left(\frac{x}{t^{1/\alpha}}\right) \,dt\,dx .
      \end{split}
\end{equation*}
Then it follows immediately that the bilateral Laplace transform of the second term in \eqref{resStable} equals the second term in \eqref{resL}.  Since the right-hand side of \eqref{resStable} vanishes for $x<0$, its bilateral Laplace transform equals its ordinary Laplace transform.  (Note however that neither term on the right-hand side of \eqref{resStable} vanishes for $x<0$, only their sum.)  But then \eqref{resStable} has the same Laplace transform as $R(\lambda,A)g(x)$, and since both are continuous in view of \eqref{Mittag-Leffler}, the result follows using the uniqueness of the Laplace transform.
\end{proof}


\begin{lemma}\label{LemResolvent}
Under the assumptions of Theorem \ref{bwgenerator}, we have for every $1<\alpha\leq 2$ that
\eqref{resEst} holds
for all $\Re \lambda>0$ and all $g\in C_\infty[0,\infty)$, with $M=M_\alpha+1+\sec(\pi/(2\alpha))$ for some $M_\alpha$ depending only on $\alpha$.
\end{lemma}

\begin{proof}
Denote by $\mathrm{BUC}(\R)$ the Banach space of bounded, uniformly continuous functions with the supremum norm.
Given $g\in C_\infty[0,\infty)$, define $\bar g\in\mathrm{BUC}(\R)$ by setting $\bar g(x)=g(x)$ for $x>0$ and $\bar g(x)=g(0)$ for $x\leq 0$.  Use \eqref{resStable} to write $R(\lambda,A)g(x)=I_1-I_2+I_3$ where
\begin{equation}\begin{split}\label{LemResolventEq1}
I_1&=\int_0^\infty\int_{-\infty}^\infty e^{-\lambda t}\frac{1}{t^{1/\alpha}}g_\alpha\left(\frac{x-\xi}{t^{1/\alpha}}\right) \bar g(\xi)\,d\xi\,dt\\
I_2&=\int_0^\infty\int_{-\infty}^0 e^{-\lambda t}\frac{1}{t^{1/\alpha}}g_\alpha\left(\frac{x-\xi}{t^{1/\alpha}}\right) \bar g(\xi)\,d\xi\,dt\\
I_3&=\frac{\mathcal L g(\lambda^{1/\alpha})}{\lambda^{1-1/\alpha}}\int_0^\infty e^{-\lambda t}\frac{x}{\alpha t^{1+1/\alpha}}g_\alpha\left(\frac{x}{t^{1/\alpha}}\right)\,dt .
  \end{split}
\end{equation}
The formula
\begin{align*}
&T^\alpha_tf(x)=\int_{-\infty}^\infty \frac{1}{t^{1/\alpha}}g_\alpha\left(\frac{x-\xi}{t^{1/\alpha}}\right) f(\xi)\,d\xi\\
&=\int_{-\infty}^\infty \frac{1}{t^{1/\alpha}}g_\alpha\left(\frac{\xi}{t^{1/\alpha}}\right) f(x-\xi)\,d\xi:=\int_{-\infty}^\infty g_{t,\alpha}(\xi)f(x-\xi)\,d\xi,\quad f\in \mathrm{BUC}(\R),
\end{align*}
defines a strongly continuous convolution semigroup on $\mathrm{BUC}(\R)$. Indeed, $\bar{T}^{\alpha}_tf=f*g_{t,\alpha}$, $f\in L_1(\R)$, defines a strongly continuous semigroup on $L_1(\R)$, see, e.g. \cite[Theorem 21.4.3]{HillePhillips}, noting that the Fourier transform of $g_{t,\alpha}$ is $e^{t(ik)^{\alpha}}$. The latter follows from \eqref{dualityLem1eq2} which also holds for $s=ik$, as $g_{\alpha}$ is absolutely integrable. Then, $T^\alpha_t$ is a subordinate semigroup (where the right-translation group on $\mathrm{BUC}(\R)$, which is strongly continuous \cite[Chapter I, Section 4.15]{Engel2000}, is subordinated against $\bar{T}_{\alpha,t}$) which is strongly continuous by \cite[Theorem 4.1]{Baeumer2009a}.

Next we show that $T^{\alpha}_t$ is a bounded analytic semigroup on $\mathrm{BUC}(\R)$, by showing that $\|\frac{d}{dt}T^{\alpha}_tf\|=\|A_{\alpha}T^{\alpha}_tf\|\le Mt^{-1}\|f\|$ for some $M> 0$, see \cite[Theorem 3.7.19]{Arendt2001}. Here, the operator $A_{\alpha}$ denotes the generator of $T^{\alpha}_t$. We have that
$$
\left\|\frac{d}{dt}T^{\alpha}_tf\right\|\le \int_{-\infty}^{\infty}\left|\frac{d}{dt}g_{t,\alpha}(x)\right|dx\,\|f\|.
$$
Recall Carlson's inequality (see, \cite{Car,Edwards})
$$
\int_{-\infty}^{\infty}|h(x)|\,dx \le C \left(\int_{-\infty}^{\infty}\left|{\mathcal F}[h](k)\right|^2dk\right)^{1/4} \left(\int_{-\infty}^{\infty}\left|\frac{d}{dk}{\mathcal F}[h](k)\right|^2dk\right)^{1/4}
$$
where ${\mathcal F}[h](k)=\int e^{-ikx}h(x)\,dx$ denotes the Fourier transform of $h$. Note that  ${\mathcal F}[\frac{d}{dt}g_{t,\alpha}](k)=(ik)^{\alpha}e^{t(ik)^{\alpha}}$. It is easy to check, using the formula for the gamma probability density, that
$$
\int_{-\infty}^\infty\left|(ik)^{\alpha}e^{t(ik)^{\alpha}}\right|^2\,dk\le \int_{-\infty}^{\infty}|k|^{2\alpha}e^{-2tc_{\alpha}|k|^{\alpha}}dk\le C_{\alpha}t^{-2-\frac{1}{\alpha}}
$$
for some $c_{\alpha},C_{\alpha}>0$, and that
$$
\int_{-\infty}^\infty\left|\frac{d}{dk}\left(-ik)^{\alpha}e^{t(-ik)^{\alpha}}\right)\right|^2\,dk\le 2\int_{-\infty}^{\infty}\left(|k|^{2(\alpha-1)}+t^2|k|^{2(2\alpha-1)}\right)e^{-2c_{\alpha}t|k|^{\alpha}}dk\le C_{\alpha}t^{-2+\frac{1}{\alpha}}.
$$
Hence $\|A_{\alpha}T^{\alpha}_tf\|\le K_{\alpha}t^{-1}\|f\|$, and so $T^{\alpha}_t$ is a bounded analytic semigroup on $\mathrm{BUC}(\R)$. Therefore, by Corollary \cite[Corollary 3.7.12]{Arendt2001},
$$
\|\lambda I_1\|=\left\|\lambda \int_{0}^{\infty}e^{-\lambda t}T^{\alpha}_t\bar{g}\,dt\right\|=\|\lambda R(\lambda,A_{\alpha})\bar{g}\|\le M_{\alpha}\|\bar{g}\|=M_{\alpha}\|g\|,\quad \Re \lambda >0.
$$

Next write
\begin{equation}
  \begin{split}
    |\lambda I_2|\leq& |\lambda g(0)|\left\|\int_0^\infty\int_{-\infty}^0 e^{-\lambda t} \frac{1}{t^{1/\alpha}} g_\alpha\left(\frac{x-\xi}{t^{1/\alpha}}\right)\,d\xi \,dt\right\|\\
    \le& |\lambda| \|g\|\left\|\int_0^\infty e^{-\lambda t}\int_{\frac{x}{t^{1/\alpha}}}^\infty g_\alpha(y)\,dy \,dt\right\|\\
    =& |\lambda| \|g\|\left\|\left.-\frac{e^{-\lambda t}}{\lambda}\int_{\frac{x}{t^{1/\alpha}}}^\infty g_\alpha(y)\,dy \right|_0^\infty +\frac1\lambda\int_0^\infty e^{-\lambda t}\frac{x}{\alpha t^{1+1/\alpha}} g_\alpha\left(\frac{x}{t^{1/\alpha}}\right) \,dt\right\|
  \end{split}
\end{equation}
A substitution $t=(x/u)^\alpha$ leads to
\begin{equation}\label{LemResolventEq1x}
\int_0^\infty e^{-\lambda t}\frac{x}{\alpha t^{1+1/\alpha}}g_\alpha\left(\frac{x}{t^{1/\alpha}}\right)\,dt =\int_0^\infty e^{-\lambda (x/u)^\alpha}g_\alpha\left(u\right)\,du ,
\end{equation}
and then it follows that
\[|\lambda I_2|\leq \|g\| \left\|\int_0^\infty e^{-\lambda\left(\frac{x}{y}\right)^\alpha}g_\alpha(y)\,dy\right\|\le \|g\|\]
for all $\Re \lambda>0$ and all $g\in C_\infty[0,\infty)$.

Another application of \eqref{LemResolventEq1x} shows that $|\lambda I_3|\leq |\lambda^{1/\alpha}\mathcal L g(\lambda^{1/\alpha}) |$.  If $\lambda=re^{i\theta}$ for some $r>0$ and $|\theta|<\pi/2$ then $\lambda^{1/\alpha}=r^{1/\alpha}e^{i\theta/\alpha}$ has real part $r^{1/\alpha}\cos(\theta/\alpha)\geq r^{1/\alpha}\cos(\pi/(2\alpha))$.  Hence $|\lambda^{1/\alpha}|=r^{1/\alpha}\leq  \Re[\lambda^{1/\alpha}]/\cos(\pi/(2\alpha))$ for all $\Re \lambda>0$.  It is not hard to check that $\Re[\lambda |\mathcal L g(\lambda)|]\leq \|g\|$ for all $\Re \lambda>0$ and all $g\in C_\infty[0,\infty)$.  Then we have
\[|\lambda I_3|\leq |\lambda^{1/\alpha}\mathcal L g(\lambda^{1/\alpha}) |=\left|\frac{|\lambda^{1/\alpha}|}{\Re[\lambda^{1/\alpha}]}\cdot\Re[\lambda^{1/\alpha}|\mathcal L g(\lambda^{1/\alpha})|]\right|\leq \frac 1{\cos(\pi/(2\alpha))}\|g\| \]
for all $\Re \lambda>0$ and all $g\in C_\infty[0,\infty)$, and the result follows.
\end{proof}

\begin{remark}
Following a slightly different path using Fourier transforms, it is possible to show that
\[\lambda R(\lambda,A)g(x)
=\lambda R(\lambda,D_x^\alpha)\bar g(x) - \lambda R(\lambda,D_x^\alpha) g(0)\I_{(-\infty,0)}(x) + c_\alpha(\lambda R(\lambda,D_x^\alpha)-\mathrm{I})\I_{(-\infty,0)}(x)
\]
where $\bar g(x)=g(x)$ for $x>0$, $\bar g(x)=g(0)$ for $x\leq 0$, $c_\alpha=\lambda^{1/\alpha}\LL g(\lambda^{1/\alpha})$, $\mathrm{I}$ is the identity operator, and $\I_{(-\infty,0)}(x)$ is the indicator function.  This form clarifies that the resolvent of the Caputo fractional derivative $A=\partial_x^\alpha$ is a modification of the resolvent of the Riemann-Liouville fractional derivative $D_x^\alpha$ to account for the boundary term, which is natural in view of \eqref{RLtoCaputoto}.  The same resolvent bound in Lemma \ref{LemResolvent} can also be obtained using this form.
\end{remark}

\section*{Acknowledgments}
The authors would like to thank Zhen-Qing Chen, University of Washington, Pierre Patie, Universit\'e Libre de Bruxelles, and Victor Rivero, Centro de Investigaci\'on en Matem\'aticas for helpful discussions.  We would also like to thank an anonymous referee for many helpful comments that significantly improved the presentation.

\section*{Appendix}
The following Matlab code computes the transition density $p(x,y,t)$ in the case $x=2$ for the reflected stable process $Z_t$ defined by \eqref{ZtDef}, where $Y_t$ is a stable L\'evy process with characteristic function \eqref{YtCF} and index $1<\alpha<2$.  This code was used to generate the plots in Figures~\ref{fig1_2} and~\ref{fig1_8}.
\vskip10pt
\begin{verbatim}
%%% Matlab script to compute p(x,y,t)
%% enter variables
    alpha=1.2; ymax=12; N=1200; t=[0,.5,1,2]; x=2;
%% initialise parameters
    h=ymax/N; y=(h:h:ymax)';
    u0=zeros(N,1);u0(floor(x/h)+1)=1/h; % initial condition
%% Make Grunwald matrix
    w=ones(1,N+1);
    for k=1:N
      w(k+1)=w(k)*(k-alpha-1)/k;
    end
    w=w/h^alpha;
    M=spdiags(repmat(w,N,1),-1:1:N-1,N,N); %enter w's along diagonals
    M(1,:)=-cumsum(w(1:N))'; %change first row for BC
%% Solve ODE system
    [~,p]=ode113(@(t,u) M*u,t,u0);
\end{verbatim}

%

\end{document}